\newcommand{\asc}{\mathrm{asc}}
\newcommand{\Asc}{\mathrm{Asc}}
\newcommand{\des}{\mathrm{des}}
\newcommand{\Des}{\mathrm{Des}}
\newcommand{\fr}[1]{\text{FR}_{#1}}
\newcommand{\Fub}{\mathrm{Fub}}
\newcommand{\N}{\mathbb{N}}
\newcommand{\Out}{\mathcal{O}}
\newcommand{\pf}[1]{\mathrm{PF}_{#1}}
\newcommand{\Sym}{\mathfrak{S}}
\newcommand{\ufr}[1]{\mathrm{UFR}_{#1}}
\newcommand{\upf}[1]{\mathrm{UPF}_{#1}}
\declaretheorem[name=Theorem,numberwithin=section]{rethm}
\newtheorem{proposition}[rethm]{Proposition}
\newtheorem{lemma}[rethm]{Lemma}
\newtheorem{corollary}[rethm]{Corollary}
\newtheorem{remark}[rethm]{Remark}
\newtheorem{definition}[rethm]{Definition}
\newtheorem{example}[rethm]{Example}
\title{
    Parking functions, Fubini rankings, and Boolean intervals in the weak order of $\mathfrak{S}_n$
}
\author[Elder]{Jennifer Elder}
\address[J.~Elder]{Department of Computer Science, Mathematics and Physics, Missouri Western State University}
\email{\textcolor{blue}{\href{mailto:jelder8@missouriwestern.edu}{jelder8@missouriwestern.edu}}}
\author[Harris]{Pamela E. Harris}
\address[P.~E.~Harris]{Department of Mathematical Sciences, University of Wisconsin-Milwaukee}
\email{\textcolor{blue}{\href{mailto:peharris@uwm.edu}{peharris@uwm.edu}}}
\author[Kretschmann]{Jan Kretschmann}
\address[J.~Kretschmann]{Department of Mathematical Sciences, University of Wisconsin-Milwaukee}
\email{\textcolor{blue}{\href{mailto:kretsc23@uwm.edu}{kretsc23@uwm.edu}}}
\author[Mart\'inez Mori]{J. Carlos Mart\'inez Mori}
\address[J.~C. Mart\'inez Mori]{H. Milton Stewart School of Industrial and Systems Engineering, Georgia Institute of Technology}
\email{\textcolor{blue}{\href{mailto:jcmm@gatech.edu}{jcmm@gatech.edu}}}
\begin{document}

\subjclass{Primary: 05A05; Secondary: 06A07, 05A15, 05A19}
\keywords{permutations, weak order, Fubini rankings, parking functions, Fibonacci numbers}

\begin{abstract}
Let $\mathfrak{S}_n$ denote the symmetric group and let $W(\mathfrak{S}_n)$ denote the weak order of $\mathfrak{S}_n$.
Through a surprising connection to a subset of parking functions, which we call \textit{unit Fubini rankings}, we provide a complete characterization and enumeration for the total number of Boolean intervals in $W(\mathfrak{S}_n)$ and the total number of Boolean intervals of rank $k$ in $W(\mathfrak{S}_n)$. 
Furthermore, for any $\pi\in\mathfrak{S}_n$, we establish that the number of Boolean intervals in $W(\mathfrak{S}_n)$ with minimal element $\pi$ is a product of Fibonacci numbers. 
We conclude with some directions for further study.
\end{abstract}

\maketitle

\section{Introduction}

A poset is called \emph{Boolean} if it is isomorphic to the poset of subsets of a set $I$ ordered by inclusion.
The term \emph{Boolean poset} is inherited from \textit{Boolean algebras}, given that one of the most familiar examples of a Boolean algebra is the power set $2^{I}$. 
If $|I| = k < \infty$, then a Boolean poset is a distributive lattice, making it a ranked poset.
Henceforth, we let $B_k$ denote a Boolean poset of rank $k$.

Boolean posets appear frequently in combinatorics, especially as intervals (subposets) within larger structures.
In these cases, they are referred to as \emph{Boolean intervals}.
One notable example is that of Boolean intervals in the \textit{right weak (Bruhat) order} on the symmetric group $\Sym_n$~\cite{bjorner2009shape, tenner2006reduced,tenner2007pattern,tenner2022interval}. 
The weak order of $\Sym_n$, denoted $W(\Sym_n)$, is generated by transpositions \[s_i = (i,i+1)\] for $i \in [n-1]$, where $[n] \coloneqq \{1, 2, \ldots, n\}$.
That is, cover relations arise from the (right-hand side) application of a single simple transposition, and simple transpositions generate this group.
Figure~\ref{fig: s6} highlights a $B_3$ interval in $W(\Sym_6)$.

\begin{figure}[ht]
    \centering{
    \includegraphics[trim={0.75cm 0.5cm 0.75cm 0.5cm},clip,width=0.8\linewidth]{./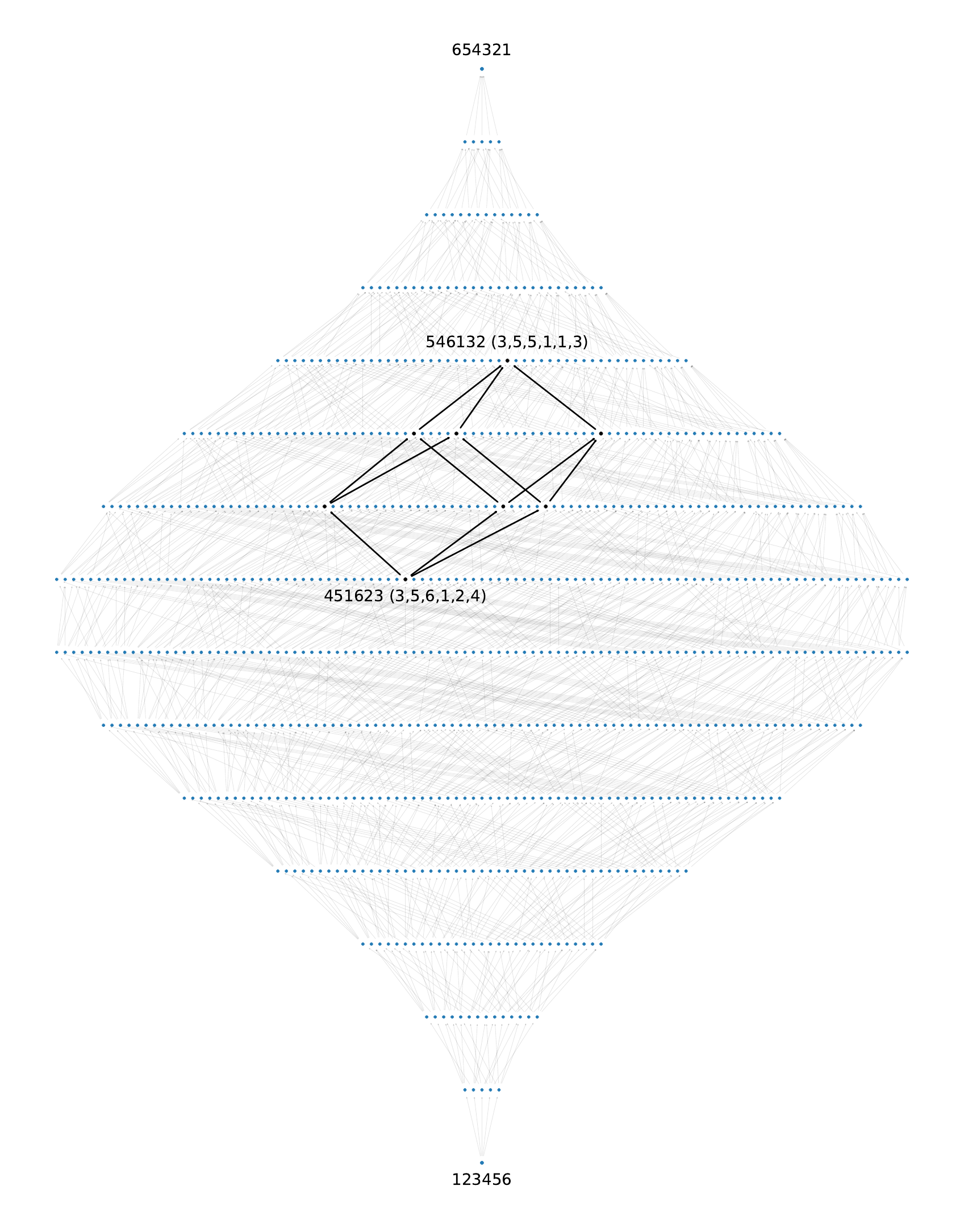}
    }
    \caption{
    Illustration of $W(\Sym_6)$.
    A Boolean interval $B_3$ with minimal element $451623$ and maximal element $546132$, written in one-line notation, is highlighted. 
    The tuples $(3, 5, 6, 1, 2, 4)$ and $(3, 5, 5, 1, 1, 3)$ indicate the unit Fubini rankings associated with the minimal and maximal Boolean subintervals of $B_3$ with minimal element $451623$ and maximal element $546132$, respectively.
    }
    \label{fig: s6}
\end{figure}

Tenner established that Boolean posets appear as intervals $[v,w]$ in the weak order if and only if $v^{-1}w$ is a permutation composed of only commuting generators \cite[Corollary 4.4]{tenner2022interval}. 
We recall that generators $s_i$ and $s_j$ commute whenever $|i-j|>1$.
We provide more background on the weak order and Boolean intervals in Section~\ref{sec: background}.
Tenner also established that Boolean intervals with a generator as minimal element are enumerated by products of at most two Fibonacci numbers~\cite[Proposition~5.9]{tenner2022interval}. 
Our first result generalizes Tenner's result as follows.

\begin{restatable}{rethm}{sizeOfFiberThm}
\label{thm: size of fiber}
Let $\pi = \pi_1 \pi_2 \cdots \pi_n \in \Sym_{n}$ be in one-line notation and partition its ascent set $\Asc(\pi)=\{i \in [n-1]: \pi_i<\pi_{i+1}\}$ into maximal blocks $b_1, b_2, \ldots, b_k$ of consecutive entries. 
Then, the number of Boolean intervals $[\pi, w]$ in $W(\Sym_n)$ with fixed minimal element $\pi$ and arbitrary maximal element $w$ (including the case $\pi=w$) is given by
\begin{align*}
    \prod_{i=1}^kF_{|b_i|+2},
\end{align*}
where $F_{\ell}$ is the $\ell$th Fibonacci number, and $F_1=F_2 = 1$.
\end{restatable}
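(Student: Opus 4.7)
The plan is to invoke Tenner's characterization (\cite[Corollary~4.4]{tenner2022interval}, recalled above) and then reduce the problem to a graph-theoretic count. That characterization says $[\pi, w]$ is Boolean if and only if $\pi^{-1}w = \prod_{i \in S} s_i$ for some $S \subseteq [n-1]$ whose elements are pairwise non-adjacent, i.e., $|i-j| > 1$ for all distinct $i, j \in S$. Because generators indexed by such an $S$ commute, the product is automatically reduced, and $S$ is recoverable from $\pi^{-1}w$ as the set of positions where the resulting involution is non-trivial. My first step is therefore to set up a bijection
\[
\{w \in \Sym_n : [\pi, w] \text{ is Boolean}\} \;\longleftrightarrow\; \mathcal{I}(\pi) \coloneqq \{S \subseteq \Asc(\pi) : \text{no two elements of } S \text{ are consecutive}\},
\]
with $S = \emptyset$ corresponding to the trivial interval $\pi = w$.

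The key technical step is to verify that, for an independent set $S \subseteq [n-1]$, the product $w \coloneqq \pi \cdot \prod_{i \in S} s_i$ satisfies $\pi \le w$ in the weak order if and only if $S \subseteq \Asc(\pi)$. Since the transpositions $\{s_i : i \in S\}$ act on pairwise disjoint pairs of positions $\{i, i+1\}$, the permutation $w$ is obtained from $\pi$ by independently swapping the entries of each such pair. A direct inversion count (using that between any two disjoint swap-pairs, and between a swap-pair and an unaffected position, the mutual contributions to the inversion number are invariant) yields
\[
\ell(w) - \ell(\pi) = \bigl|\{i \in S : i \in \Asc(\pi)\}\bigr| - \bigl|\{i \in S : i \notin \Asc(\pi)\}\bigr|.
\]
Combined with $\ell(\pi^{-1}w) = |S|$ and the weak-order chain condition $\ell(w) = \ell(\pi) + \ell(\pi^{-1}w)$, this forces $S \subseteq \Asc(\pi)$, establishing the bijection.

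With the bijection in hand the enumeration is a short combinatorial exercise. By maximality of the blocks $b_1, \ldots, b_k$, any two elements drawn from distinct blocks of $\Asc(\pi)$ differ by at least $2$, so an element of $\mathcal{I}(\pi)$ amounts to an independent (no two consecutive) subset chosen inside each block. The number of such subsets of a block of size $m$ equals the number of independent sets in the path graph on $m$ vertices, which is $F_{m+2}$ (by a one-line induction on $m$ conditioning on whether the largest index of the block is chosen, with $F_2 = 1$ when $m = 0$). Multiplying across the blocks yields
\[
|\mathcal{I}(\pi)| = \prod_{i=1}^{k} F_{|b_i|+2},
\]
as claimed; the case $\Asc(\pi) = \emptyset$ (so $k = 0$ and the empty product equals $1$) correctly accounts for the lone interval $[\pi, \pi]$.

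The principal obstacle I anticipate is formalizing the length-arithmetic argument in the second paragraph: one must argue cleanly that multiplying $\pi$ on the right by a product of commuting simple transpositions changes the length additively, with each factor contributing $+1$ exactly when its index is an ascent of $\pi$. While intuitive from the positional disjointness, a careful proof should verify that the inversions spanning two disjoint swap-pairs (and those spanning a swap-pair and an unaffected position) are invariant under the swaps, so that the contributions from different $i \in S$ truly decouple.
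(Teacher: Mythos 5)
Your proposal is correct and follows essentially the same route as the paper: both reduce the problem, via Tenner's Corollary~4.4, to counting subsets of $\Asc(\pi)$ with no two consecutive elements, and then count such subsets block by block using the fact that a block of size $m$ contributes $F_{m+2}$ independent sets. The only difference is that you re-derive the condition $S \subseteq \Asc(\pi)$ from the ``product of commuting generators'' form of Tenner's result via a length computation, whereas the paper cites the ascent-set form of that corollary directly; this makes your write-up more self-contained but does not change the argument.
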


Our proofs of Theorem~\ref{thm: size of fiber} and subsequent results rely on a class of combinatorial objects we refer to as \emph{unit Fubini rankings}, which are a subset of Fubini rankings. 
A tuple $\alpha \in [n]^n$ is a \emph{Fubini ranking} of length $n$ if it records a valid ranking over $n$ competitors with ties allowed, where the distinct values in the tuple are the \emph{ranks} (refer to Definition~\ref{def:Fubini ranking} for a technical definition).
For example, $(4,1,1,1)$ is a Fubini ranking since competitors $2$, $3$, and $4$ are tied and rank first, and competitor $1$ ranks fourth.
However, $(1,1,2,3)$ is not a Fubini ranking since competitors $1$ and $2$ are tied and rank first, implying no competitor can rank second (indeed, the next available rank would be third).
Fubini rankings are enumerated by the Fubini numbers (OEIS \href{https://oeis.org/A000670}{A000670}), which first appeared in the work of Cayley in enumerating trees with certain properties \cite{cayley1875analytical}. 
For more on competitions allowing ties we recommend Mendelson~\cite{mendelson1982races}.

\emph{Unit} Fubini rankings are the subset of Fubini rankings in which ranks are shared by at most two competitors. 
For example, $(4,2,2,1)$ is a unit Fubini ranking, whereas $(4,1,1,1)$ is not a unit Fubini ranking.
The term ``unit'' is derived from \emph{unit interval parking functions}, these are parking functions in which cars park at most one spot away from their preference \cite{hadaway2022combinatorial}. 
We employ this language as in Corollary~\ref{lemma: ufr intersection} we show that the set of unit Fubini rankings is precisely the intersection between the set of Fubini rankings and the set of unit interval parking functions.

Prior to stating our main result, we acknowledge the double use of the word ``rank'' when describing the rank of a poset and the ranks used in a Fubini ranking.

\begin{restatable}{rethm}{nkThm}
\label{thm:nkThm}
The set of unit Fubini rankings with $n-k$ distinct ranks is in bijection with the set of Boolean intervals in $W(\Sym_n)$ of rank $k$.
\end{restatable}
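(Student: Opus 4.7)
The plan is to construct an explicit bijection $\Phi$ from rank-$k$ Boolean intervals in $W(\Sym_n)$ to unit Fubini rankings of length $n$ with exactly $n-k$ distinct ranks. By Tenner's characterization cited above, each rank-$k$ Boolean interval $[v,w]$ is parameterized by its minimal element $v \in \Sym_n$ together with a subset $S \subseteq \Asc(v)$ of size $k$ whose elements are pairwise non-adjacent, and the maximal element is $w = v \prod_{i \in S} s_i$.

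I would define $\Phi([v,w]) = r$ by setting $r_x \coloneqq \min(v^{-1}(x), w^{-1}(x))$ for each $x \in [n]$. For each $i \in S$, right multiplication by $s_i$ swaps the values $v_i < v_{i+1}$, so $v^{-1}(v_i) = i$, $w^{-1}(v_i) = i+1$, $v^{-1}(v_{i+1}) = i+1$, and $w^{-1}(v_{i+1}) = i$, giving $r_{v_i} = r_{v_{i+1}} = i$; for every value $x$ untouched by the swaps in $S$, we have $v^{-1}(x) = w^{-1}(x) = r_x$. Thus $r$ has exactly $k$ tied pairs, one per element of $S$ at rank $i$, with each rank $i+1$ (for $i \in S$) omitted, while the untouched values fill in the remaining ranks. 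Non-adjacency of $S$ guarantees the omitted ranks are distinct, yielding $n-k$ distinct used ranks, and ensures that after each tie at rank $i$ the next used rank is $\geq i+2$, so $r$ is a unit Fubini ranking of the desired shape.

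For the inverse $\Psi$, given such a unit Fubini ranking $r$, I would read off its $k$ tied pairs: for each shared rank $i$ occurring at values $x < y$, set $v^{-1}(x) = i$, $v^{-1}(y) = i+1$, and place $i$ in $S$; for each unique rank $j$ occurring at a value $x$, set $v^{-1}(x) = j$. The unit Fubini property, namely that every tie skips exactly one subsequent rank and no rank is shared by more than two values, will guarantee that $v^{-1}$ is a permutation of $[n]$, that each $i \in S$ is an ascent of $v$, and that no two elements of $S$ are adjacent; setting $w = v \prod_{i \in S} s_i$ then recovers a rank-$k$ Boolean interval.

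The main obstacle I anticipate is the bookkeeping needed to show that $\Phi$ and $\Psi$ are mutually inverse, especially the equivalence between the Fubini condition on $r$ and the non-adjacency condition on $S$. The cleanest route is to verify directly that in any unit Fubini ranking the used ranks together with the set $\{i+1 : i \text{ is shared}\}$ partition $[n]$, which both certifies that $\Psi$ produces a valid permutation and pins down the correspondence between ties and ascents. With this lemma in hand, checking $\Phi \circ \Psi = \mathrm{id}$ and $\Psi \circ \Phi = \mathrm{id}$ reduces to tracking each value through its swap (or non-swap), completing the bijection.
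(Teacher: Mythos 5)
Your proposal is correct, and the map you construct coincides with the paper's bijection, but you reach and verify it by a genuinely different route. The paper works fiberwise over the parking-function outcome map $\Out$: it shows (Proposition~\ref{prop:fiberascents}) that $\Out^{-1}(\pi)=\{\delta_I(\pi^{-1}): I\subseteq\Asc(\pi)\ \text{nonconsecutive}\}$, where $\delta_I$ lowers the unique occurrence of $i+1$ in $\pi^{-1}$ to $i$ for each $i\in I$, and then matches $\delta_I(\pi^{-1})$ with the interval $\left[\pi,\pi\prod_{i\in I}s_i\right]$. Your formula $r_x=\min(v^{-1}(x),w^{-1}(x))$ outputs exactly $\delta_S(v^{-1})$ (for $v=412356$ and $S=\{3,5\}$ both give $233155$), so the underlying correspondence is identical; the difference is that you certify membership in $\ufr{n}$ and construct the inverse by direct rank-counting --- your observation that the used ranks together with $\{i+1: i\ \text{shared}\}$ partition $[n]$ --- rather than by parking cars. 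This makes your argument more self-contained: it needs only Tenner's characterization and the definition of a unit Fubini ranking, and entirely bypasses the outcome map, the well-definedness of the maps $\delta_i$, and their commutativity (Theorem~\ref{lem: commute}, Corollary~\ref{lem:composition}). The trade-off is that your partition lemma is essentially Theorem~\ref{thm: intersection} together with Corollary~\ref{cor: block internals} in disguise and still requires a careful proof, and you forgo the parking interpretation that the paper reuses downstream in the enumerations of Theorems~\ref{thm: size of fiber} and~\ref{newFormulaThm}. I see no step in your plan that would fail.
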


We leverage the parking interpretation of unit Fubini rankings to count unit Fubini rankings with $n-k$ distinct ranks.
In turn, by Theorem~\ref{thm:nkThm}, this provides a count of Boolean intervals in $W(\Sym_n)$ of rank $k$.
\begin{restatable}{rethm}{newFormulaThm}
\label{newFormulaThm}
Let $f(n,k)$ denote the number of Boolean intervals in $W(\Sym_n)$ of rank $k$.
Then,
\begin{equation}
\label{eq: f(n,k)}
    f(n,k) = \frac{n!}{2^k}\binom{n-k}{k}.
\end{equation}
\end{restatable}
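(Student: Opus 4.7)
The plan is to invoke Theorem~\ref{thm:nkThm} to reduce the problem to counting unit Fubini rankings of length $n$ with exactly $n-k$ distinct ranks, then perform that count directly via a transparent bijection with ordered set partitions of $[n]$ having block sizes in $\{1,2\}$.

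First, I would set up the standard bijection between Fubini rankings of length $n$ that use $n-k$ distinct values and ordered set partitions $(B_1, B_2, \ldots, B_{n-k})$ of $[n]$: given a Fubini ranking $\alpha$, let $B_j$ be the set of positions whose rank equals the $j$-th smallest rank appearing in $\alpha$; conversely, given an ordered set partition, recover $\alpha$ by assigning every element of $B_j$ the rank $1 + \sum_{i<j}|B_i|$. The Fubini condition (no rank is skipped below a used rank in the appropriate sense) is automatic from this recipe. Under this bijection, the ``unit'' condition that no rank be shared by more than two competitors translates to the clean constraint $|B_j|\in\{1,2\}$ for all $j$. If there are $d$ blocks of size two and $s$ blocks of size one, then $s+d=n-k$ and $s+2d=n$, forcing $d=k$ and $s=n-2k$.

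The count is then immediate: choose which $k$ of the $n-k$ blocks are doubletons in $\binom{n-k}{k}$ ways, and assign the elements of $[n]$ to the now-structured ordered sequence of blocks in $\frac{n!}{(2!)^{k}(1!)^{n-2k}} = \frac{n!}{2^k}$ ways, for a product of $\frac{n!}{2^k}\binom{n-k}{k}$. When $2k>n$ the binomial vanishes, matching the obvious fact that no configuration with $k$ doubletons exists. There is no real obstacle here beyond cleanly stating the ordered-set-partition bijection; the only point that must be checked carefully is that the rank values $1+\sum_{i<j}|B_i|$ genuinely produce a Fubini ranking of the prescribed type and that the correspondence is a bijection, after which Theorem~\ref{thm:nkThm} finishes the proof.
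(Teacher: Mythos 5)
Your proposal is correct and follows essentially the same route as the paper: both reduce via Theorem~\ref{thm:nkThm} to counting unit Fubini rankings of length $n$ with $n-k$ distinct ranks, then choose which $k$ of the $n-k$ blocks are doubletons in $\binom{n-k}{k}$ ways and distribute the elements of $[n]$ accordingly. The only cosmetic difference is that you obtain the factor $\frac{n!}{2^k}$ in one step as a multinomial coefficient via the ordered-set-partition bijection, whereas the paper places the repeated values iteratively as $(n-2k)!\prod_{i=0}^{k-1}\binom{n-2i}{2}$ and simplifies.
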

Equation~\eqref{eq: f(n,k)} recovers the following known results:
\begin{itemize}
    \item $f(n,0)=n!$ is the number of permutations $(B_0)$ in $\Sym_n$ (OEIS \href{https://oeis.org/A000142}{A000142}), 
    \item $f(n,1)=\frac{n!(n-1)}{2}$ is the number of edges ($B_1$) in $W(\Sym_n)$ (OEIS \href{https://oeis.org/A001286}{A001286}), and 
    \item $f(n,2)=\frac{n!(n-2)(n-3)}{8}$ is the number of 4-cycles ($B_2$) in $W(\Sym_n)$ (OEIS \href{https://oeis.org/A317487}{A317487}).
\end{itemize}  
To the best of our knowledge, we are the first to establish a general formula for $f(n,k)$.

By setting $q=1$ into the exponential generating function \cite[Exercise~3.185(h)]{stanley2012enumerative}
\begin{equation}
    F(x,q) 
    = \sum_{n\geq 0} \sum_{k\geq 0} f(n,k)q^k\frac{x^n}{n!} 
    = \frac{1}{1-x-\frac{q}{2}x^2},
\end{equation}
Stanley~\cite{stanley2016mathoverflow} points out that the \emph{total} number of Boolean intervals in $W(\Sym_n)$ (OEIS \href{https://oeis.org/A080599}{A080599}) satisfies the recurrence relation  
\begin{equation}
    f(n+1) = (n+1)f(n)+\binom{n+1}{2}f(n-1),
\end{equation}
where $f(0)=1$ and $f(1)=1$. 
However, Stanley did not provide a closed formula for the values of $f(n,k)$ as we do in Equation~\eqref{eq: f(n,k)}.

The remainder of this paper is organized as follows. 
In Section~\ref{sec: background} we provide necessary background on Boolean intervals, parking functions, and Fubini rankings.
In Section~\ref{sec:ufr} we present preliminary results on unit Fubini rankings, including an inequality characterization and operations that preserve unit Fubini rankings. 
In Section~\ref{sec: bijection} we prove Theorem~\ref{thm:nkThm}, establishing a bijection between unit Fubini rankings with $n-k$ distinct ranks and Boolean intervals in $W(\Sym_n)$ of rank $k$.
In Section~\ref{sec: enumerations}, we prove Theorem~\ref{newFormulaThm}, giving a closed formula for the number of Boolean intervals in $W(\Sym_n)$ of rank $k$.
We conclude with Section \ref{sec:future}, providing directions for future study.

\section{Background}
\label{sec: background}

We now present necessary background on Boolean intervals in $W(\Sym_n)$. 
We then provide some history and results related to Fubini rankings and their interpretation as parking functions, which we will use in the proofs of our main results.

\subsection{Boolean Intervals in the Weak Order}

Boolean posets are constructed by subsets of a set $I$ ordered by inclusion. 
Figure~\ref{fig: ranks0to3} illustrates some small examples.
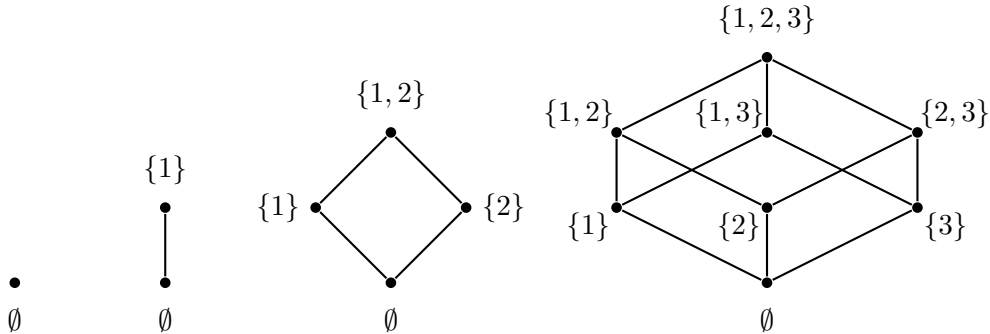
\begin{figure}[ht]
    \centering
    \begin{tikzpicture}
    [inner sep=0pt,thick, dot/.style={fill=black,circle,minimum size=4pt}]

\node[dot](a6) at (0,0) {};
\node at (0,-.5) {$\emptyset$};

\node[dot](a4) at (2,0) {};
\node at (2,-.5) {$\emptyset$};
\node[dot](a5) at (2,1) {};
\node at (2,1.5) {$\{1\}$};
\draw (a4)--(a5);
    
\node[dot](a0) at (5,0) {};
\node at (5,-.5) {$\emptyset$};
\node[dot](a1) at (6,1) {};
\node at (6.5,1) {$\{ 2\}$};
\node[dot](a2) at (4,1) {};
\node at (3.5,1) {$\{1\}$};
\node[dot](a3) at (5,2) {};
\node at (5,2.5) {$\{ 1,2\}$};
\draw (a0)--(a1)--(a3)--(a2)--(a0);

\node[dot](a7) at (10,0) {};
\node at (10,-.5) {$\emptyset$};

\node[dot](a8) at (8,1) {};
\node at (7.625,0.75) {$\{1\}$};
\node[dot](a9) at (10,1) {};
\node at (9.625,0.75) {$\{2\}$};
\node[dot](a10) at (12,1) {};
\node at (12.375,0.75) {$\{3\}$};

\node[dot](a11) at (8,2) {};
\node at (7.5,2.25) {$\{1,2\}$};
\node[dot](a12) at (10,2) {};
\node at (9.5,2.25) {$\{1,3\}$};
\node[dot](a13) at (12,2) {};
\node at (12.5,2.25) {$\{2,3\}$};

\node[dot](a14) at (10, 3) {};
\node at (10,3.5) {$\{1,2,3\}$};

\draw (a7)--(a8);
\draw (a7)--(a9);
\draw (a7)--(a10);
\draw (a8)--(a11);
\draw (a8)--(a12);
\draw (a9)--(a11);
\draw (a9)--(a13);
\draw (a10)--(a12);
\draw (a10)--(a13);
\draw(a11)--(a14);
\draw(a12)--(a14);
\draw(a13)--(a14);

\end{tikzpicture}
    \caption{Boolean posets $B_0$, $B_1$, $B_2$, and $B_3$.}
    \label{fig: ranks0to3}
\end{figure}

Unless specified, we write permutations in one-line notation. The following definition plays a key role in our proof of Theorem~\ref{thm: size of fiber}.
\begin{definition}\label{def:descentsandascents}
For a permutation $\sigma=\sigma_1\sigma_2\cdots\sigma_n\in \Sym_n$, the \emph{ascent set of $\sigma$} is given by
\begin{equation*}
    \Asc(\sigma)=\{j\in[n-1]\;:\; \sigma_j<\sigma_{j+1}\}.
\end{equation*}
Let $\asc(\sigma)=|\Asc(\sigma)|$ denote the number of ascents of $\sigma$.
Similarly, the \emph{descent set of $\sigma$} is given by
\begin{equation*}
    \Des(\sigma)=\{j\in[n-1]\;:\; \sigma_j>\sigma_{j+1}\}.
\end{equation*}
Let $\des(\sigma)=|\Des(\sigma)|$ denote the number of descents of $\sigma$.
\end{definition}

The \textit{right weak (Bruhat) order}, denoted $W(\mathfrak{S}_n)$, is a partial order on $\mathfrak{S}_n$. 
Its cover relations are defined by the application of a single simple (adjacent) transposition on the right-hand side. 
That is, $\tau \lessdot \sigma$ if and only if $\tau s_i =\sigma$ for some $i\in \Des(\sigma)$. 
In general, if $\tau \leq \sigma$, then there exists a sequence $s_{i_1}, \ldots, s_{i_k}$ of simple transpositions such that $\tau s_{i_1}\ldots s_{i_k} = \sigma$.

In fact, $W(\mathfrak{S}_n)$ is a bounded lattice for all $n\geq 2$ \cite{stanley2012enumerative}.
In one-line notation, its minimal element is $12\cdots n$ while its maximal element is $n(n-1) \cdots 21$. 
Figure~\ref{fig: s_4} illustrates $W(\Sym_4)$ with its elements written in one-line notation.

\begin{figure}[ht]
    \centering
    \includegraphics[width=0.5\linewidth]{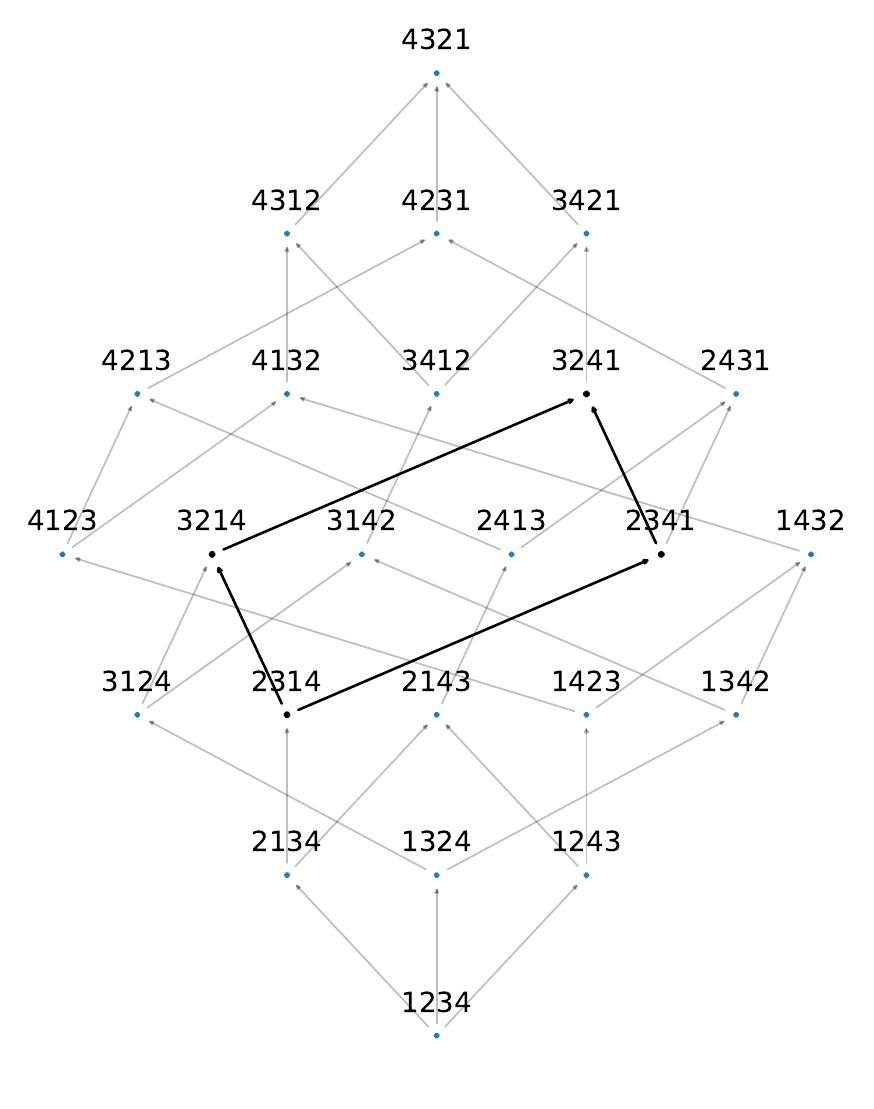}
  \caption{
  Illustration of $W(\mathfrak{S}_4)$ with a highlighted  Boolean interval $B_2$.}
  \label{fig: s_4}
\end{figure}

\begin{remark}
In a similar way, we can define the \emph{weak left (Bruhat) order}, where $\tau \leq \sigma$ if and only if there exists a sequence $s_{k_1}, \ldots, s_{k_m}$ of simple transpositions such that $\sigma = s_{k_m} \cdots s_{k_1} \tau$. 
The two weak orders are distinct, but isomorphic under the map $\sigma \mapsto \sigma^{-1}$.
\end{remark}

A subset $[\sigma, \tau] \subseteq W(\Sym_{n})$ is an (nonempty) interval
if $\sigma \leq \tau$ and $\pi \in [\sigma, \tau]$ whenever $\sigma \leq \pi \leq \tau$.
Tenner established that Boolean intervals in $W(\Sym_n)$ have the structure $[v,w]$ if and only if $v^{-1}w$ is a permutation composed of only commuting generators \cite[Corollary 4.4]{tenner2022interval}. 

\begin{example}
In Figure~\ref{fig: s_4}, if $\pi\in \Sym_4$, then the interval $[\pi,\pi]$ is a Boolean interval of rank zero. 
In addition, all intervals $[\pi, \pi s_i]$ where $i\in \Asc (\pi)$ are Boolean intervals of rank one. 
Finally, if $\Asc(\pi) = \{1,3\}$, then the interval $[\pi , \pi s_1s_3]$ is a Boolean interval of rank two. 
For example, the interval $[2314, 3241]$, highlighted in Figure~\ref{fig: s_4}, is one of the six Boolean intervals of rank two in $W(\Sym_4)$.
\end{example}

\subsection{Parking Functions, Unit Interval Parking Functions, and Fubini Rankings}

A tuple $\alpha=(a_1,a_2,\ldots,a_n)\in[n]^n$ is a \emph{parking function} of length $n$ if its weakly increasing rearrangement $\alpha'=(a_1',a_2',\ldots,a_n')$ satisfies $a_i'\leq i$ for all $i\in[n]$. 
For example $\alpha=(1,6,4,4,3,3,2)$ is a parking function of length seven as its weakly increasing rearrangement $\alpha'=(1,2,3,3,4,4,6)$ satisfies the inequality conditions.
However, $\alpha=(1,5,4,6,6,3,7)$ is not a parking function, as its weakly increasing rearrangement 
$\alpha'=(1,3,4,5,6,6,7)$ does not satisfy the inequality condition for $i=2$.
Let $\pf{n}$ denote the set of parking functions of length $n$.
Parking functions were introduced by Konheim and Weiss~\cite{konheim1966occupancy}, who established that $|\pf{n}|=(n+1)^{n-1}$ for all $n \geq 1$.

One can interpret a tuple $\alpha=(a_1,a_2,\ldots,a_n)\in[n]^n$ as encoding the parking preferences of $n$ cars that attempt to park, one at a time, on a one-way street with $n$ parking spots. 
When car $i \in [n]$ arrives, it attempts to park in its preferred spot $a_i$.
If spot $a_i$ is unoccupied, car $i$ parks there.
Otherwise, car $i$ continues driving down the one-way street until it parks in the first unoccupied spot, if there is one.
If no such spot exists, then car $i$ is unable to park.
If all cars are able to park, then $\alpha$ is a parking function.
Figure~\ref{fig: parking} illustrates the order in which cars park on the street when $\alpha=(1,6,4,4,3,3,2)$. 
We refer to the resulting parking order as the \emph{outcome} of $\alpha$. 

\begin{figure}[ht]
    \centering
    \begin{tikzpicture}
\node at (3.5,-1) {$\alpha=(1,6,4,4,3,3,2)$};
    \draw[step=1cm,gray,very thin] (0,0) grid (7,1);
    \draw[ultra thick,->] (-.75,.5) -- (-.25,.5);
    \draw[fill=gray!50] (.1,0.1) rectangle (.9,.9);
    \node at (.5,.5) {$1$};
    \draw[fill=gray!50] (1.1,0.1) rectangle (1.9,.9);
    \node at (1.5,.5) {$7$};        \draw[fill=gray!50] (2.1,0.1) rectangle (2.9,.9);
    \node at (2.5,.5) {$5$};
    \draw[fill=gray!50] (3.1,0.1) rectangle (3.9,.9);
    \node at (3.5,.5) {$3$};
    \draw[fill=gray!50] (4.1,0.1) rectangle (4.9,.9);
    \node at (4.5,.5) {$4$};
    \draw[fill=gray!50] (5.1,0.1) rectangle (5.9,.9);
    \node at (5.5,.5) {$2$};
    \draw[fill=gray!50] (6.1,0.1) rectangle (6.9,.9);
    \node at (6.5,.5) {$6$};
    \node at (.5,-.25) {$1$};
    \node at (1.5,-.25) {$2$};
    \node at (2.5,-.25) {$3$};
    \node at (3.5,-.25) {$4$};
    \node at (4.5,-.25) {$5$};
    \node at (5.5,-.25) {$6$};
    \node at (6.5,-.25) {$7$};
    \end{tikzpicture}
    \caption{The parking outcome of the preference tuple $\alpha=(1,6,4,4,3,3,2)$.
The upper row of data (in boxes) gives the car labels and the bottom row of data gives the parking spots.}
    \label{fig: parking}
\end{figure}
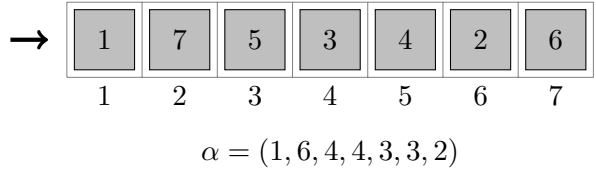

Hadaway and Harris introduced unit interval parking functions, which are the subset of parking functions in which cars park exactly at their preferred spot or one spot away~\cite{hadaway2022combinatorial}. 
For example, $(1, 2, 3, 4, 5)$, $(1, 1, 3, 4, 5)$, $(1, 1, 2, 4, 5)$ are unit interval parking functions (of length $5$), whereas $(1, 1, 1, 1, 1)$ is a parking function but not a unit interval parking function.
Let $\upf{n}$ denote the set of unit interval parking functions of length $n$.
Hadaway and Harris established bijectively that the number of unit interval parking functions of length $n$ is the Fubini numbers, also known as the ordered Bell numbers (OEIS \href{https://oeis.org/A000670}{A000670}).
That is,
\begin{align}\label{eq:fubini numbers}
 |\upf{n}|
& =
 \Fub_{n}
 =\sum_{k=1}^n k!\, S(n,k),
\end{align}
where $S(n,k)$ are Stirling numbers of the second kind (OEIS \href{https://oeis.org/A008277}{A008277}), which count the number of set partitions of $[n]$ with $k$ non-empty parts. 

To establish their result, Hadaway and Harris proved that the set of unit interval parking functions is in bijection with the set of Fubini rankings.
\begin{definition}\label{def:Fubini ranking}
  A Fubini ranking of length $n$ is a tuple $r=(r_1,r_2,\ldots,r_n)\in [n]^n$ that records a valid ranking over $n$ competitors with ties allowed (i.e., multiple competitors can be tied and have the same rank).
However, if $k$ competitors are tied and rank $i$th, then the $k-1$ subsequent ranks $i + 1, i + 2, \ldots, i + k - 1$ are disallowed.  
\end{definition}
For example, if two competitors are tied and rank first, then the second rank is disallowed and the next available rank is the third.

Similarly, $(1, 1, 3, 3, 5)$, $(1, 2, 3, 4, 5)$, $(1, 1, 1, 1, 1)$, $(3, 1, 5, 1, 3)$ are all Fubini rankings (of length $5$) while $(3, 1, 5, 1, 2)$ is not, as competitors $2$ and $4$ are tied and rank first, implying no competitor can rank second.
Let $\fr{n}$ denote the set of Fubini rankings of length $n$.

By the definition of Fubini ranking, any rearrangement of a Fubini ranking is itself a Fubini ranking.
In other words, Fubini rankings are invariant under permutations.
As we reference this fact in a later section, we state it formally below.
\begin{lemma}
\label{fubini perm invariant}
Fubini rankings are invariant under permutations.
\end{lemma}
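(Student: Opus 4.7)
The plan is to observe that the defining property of a Fubini ranking is a condition on the \emph{multiset} of values appearing in the tuple, not on the order in which those values appear. More precisely, I would first rephrase Definition~\ref{def:Fubini ranking} in the following equivalent ``multiset form'': a tuple $r=(r_1,\ldots,r_n)\in[n]^n$ is a Fubini ranking if and only if, for every value $i$ that appears in $r$, letting $k_i\geq 1$ denote the multiplicity with which $i$ appears, none of the values $i+1, i+2, \ldots, i+k_i-1$ appear in $r$. The equivalence of this reformulation with the ``competitor'' description is immediate: $k_i$ competitors being tied at rank $i$ corresponds exactly to the value $i$ appearing $k_i$ times in $r$, and the forbidden subsequent ranks $i+1,\ldots,i+k_i-1$ correspond to the values that must be absent.

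Once this reformulation is in hand, the lemma follows in one line. Given a Fubini ranking $r$ and any $\sigma\in\Sym_n$, the tuple $r'=(r_{\sigma(1)},\ldots,r_{\sigma(n)})$ has the same multiset of entries as $r$. Hence for each value $i$ appearing in $r'$, its multiplicity $k_i$ and the set of forbidden values $i+1,\ldots,i+k_i-1$ are the same as for $r$, and the required absences are automatically inherited. Therefore $r'$ is also a Fubini ranking.

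The only real content here is verifying that the competitor-based definition can be phrased purely in terms of the multiset of entries; after that, permutation-invariance is a tautology. There is no substantive obstacle, and I would not expect the argument to require more than a short paragraph in the paper. An equivalent presentation I would consider, if brevity were preferred, is to note that $r$ is a Fubini ranking if and only if its weakly increasing rearrangement $r'$ satisfies the Fubini condition, and since $r$ and any permutation of $r$ share the same weakly increasing rearrangement, they are simultaneously Fubini rankings or not.
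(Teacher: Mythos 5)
Your proposal is correct and takes the same route as the paper, which offers no formal proof at all: it simply observes that the defining condition of a Fubini ranking depends only on which values appear and with what multiplicities, so rearranging the tuple cannot affect it. One small caveat: the ``multiset form'' you state is not quite \emph{equivalent} to Definition~\ref{def:Fubini ranking} --- for instance $(2,2)$ satisfies your condition (the value $2$ has multiplicity $2$ and only forbids $3$) yet is not a valid ranking of two competitors, since no one ranks first --- so the correct characterization also needs the smallest occurring rank to be $1$ and each subsequent rank to equal the previous one plus its multiplicity. This does not damage the lemma, because that fuller condition is still a condition on the multiset of entries alone; your alternative phrasing via the common weakly increasing rearrangement is the cleanest way to say this.
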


In the remainder of this paper, we consider the intersection of Fubini rankings and unit interval parking functions, which we describe in the next section.

\section{Unit Fubini Rankings}\label{sec:ufr}

The intersection $\fr{n}\cap \upf{n}$ is non-trivial for all $n>1$ and this set plays a key role in our arguments.
\begin{definition}
    For all $n\geq 1$, the set $\ufr{n}\coloneqq\fr{n}\cap \upf{n}$ and its elements are referred to as \emph{unit Fubini rankings}. 
\end{definition}
Table~\ref{tab: UFR_n} gives the cardinality of $\ufr{n}$ for small values of $n$, agreeing with OEIS \href{https://oeis.org/A080599}{A080599}, which Stanley identifies as the number of Boolean intervals in $W(\mathfrak{S}_n)$. 
His remark motivates this work.

\begin{table}[!ht]
    \centering
    \begin{tabular}{c|c|c|c|c|c|c|c}
      $n$   &  1 & 2 & 3 & 4 & 5 & 6 & 7\\
      \hline
       $|\ufr{n}|$   & 1 & 3 & 12 & 66 & 450 & 3690 & 35280
    \end{tabular}
    \caption{The number of unit Fubini rankings with $1\leq n\leq 7$ competitors.}
\label{tab: UFR_n}
\end{table}

Bradt, Elder, Harris, Rojas Kirby, Reutercrona, Wang, and Whidden \cite{bradt2024unit}, gave a complete characterization of unit interval parking functions, which we utilize to give a characterization of unit Fubini rankings. 

\begin{definition}[\cite{bradt2024unit}]
\label{def: block structure}
Let $\alpha = (a_1, a_2, \ldots, a_n) \in \upf{n}$ and $\alpha' = (\alpha_1', \alpha_2', \ldots, \alpha_n')$ be its weakly increasing rearrangement.
Let $i_1, i_2, \ldots, i_m \in [n]$ be the increasing sequence of indices satisfying $\alpha'_{i_j} = i_j$.
The partition of $\alpha'$ as the concatenation $b_1|b_2|\cdots|b_m$ where $b_j = (\alpha_{i_j}', \alpha_{i_j + 1}',\ldots, \alpha_{i_{j+1}-1}')$ is called the \textit{block structure} of $\alpha$. 
Each part $b_j$ for $j \in [m]$ is called a \emph{block} of $\alpha$.
\end{definition}

Next we state the characterization of unit parking functions by Bradt et al. \cite[Theorem 2.9]{bradt2024unit}.

\begin{restatable}{rethm}{upf_rearrangment}$($\cite{bradt2024unit}$)$\label{thm:upf_rearrangement}
Given $\alpha = (a_1, \ldots, a_n) \in \upf{n}$, let $\alpha'$ be its weakly increasing rearrangement and $\alpha'=\pi_1\,|\,\pi_2\,|\,\dots\,|\,\pi_m$ be the block structure of $\alpha$.
\begin{enumerate}
    \item  There are
\begin{align}
\binom{n}{|\pi_1|,\ldots, |\pi_m|}   \label{eq:rearrange count}
\end{align}
possible rearrangements $\sigma$ of $\alpha$ such that $\sigma$ is still a unit interval parking function.
\item A rearrangement $\sigma$ of $\alpha$ is in $\upf{n}$ if and only if the entries in $\sigma$ respect the relative order of the entries in each of the blocks $\pi_1,\pi_2,\ldots,\pi_m$.
\end{enumerate}
\end{restatable}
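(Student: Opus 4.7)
The plan is to prove parts (1) and (2) jointly by first reducing the problem to an independent within-block analysis, and then showing that each block admits a unique admissible internal ordering.

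As a preliminary, I would pin down the rigid form of each block of $\alpha'$. From the block definition together with the unit-displacement property of the sorted parking of $\alpha'$, one can show inductively that $\alpha'_{i_j + t} = i_j + t - 1$ for every $t \geq 1$ with $i_j + t < i_{j+1}$: the non-anchor condition forces $\alpha'_{i_j+t} < i_j + t$, the weakly increasing condition forces $\alpha'_{i_j+t} \geq \alpha'_{i_j+t-1}$, and unit displacement rules out any smaller value. Hence each block has the rigid form
\[
\pi_j = (i_j,\, i_j,\, i_j+1,\, i_j+2,\, \ldots,\, i_j + |\pi_j| - 2),
\]
with the singleton case $\pi_j = (i_j)$ when $|\pi_j| = 1$. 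This rigidity drives everything else.

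Next, I would establish a block-independence lemma: for any $\sigma \in \upf{n}$ rearranging $\alpha$, the cars whose preferences lie in $[i_j, i_{j+1} - 1]$ park exactly in the spots $[i_j, i_{j+1} - 1]$. The argument is a prefix pigeonhole: by unit displacement, cars with preference at most $i_{j+1} - 1$ park in $[1, i_{j+1}]$, while cars with preference at least $i_{j+1}$ park in $[i_{j+1}, n]$; counting the $i_{j+1} - 1$ cars of the first type forces them to fill exactly $[1, i_{j+1} - 1]$, and induction on $j$ isolates the single block. Consequently, the parking process restricts to a self-contained unit parking problem on each block's own spots.

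With the analysis localized, I would characterize the valid within-block orderings. Shifting so that $i_j = 1$, one block has preference multiset $\{1, 1, 2, 3, \ldots, k - 1\}$ and spot set $[1, k]$. I would argue that any unit-parking arrival order $(\sigma_1, \ldots, \sigma_k)$ satisfies the invariant that after the $t$-th arrival the filled spots are exactly $[1, t]$; equivalently, $\sigma_t \in \{t - 1, t\}$ at each step, with $\sigma_1 = 1$ forced. Coupling this pointwise constraint to the exact multiplicities (two copies of $1$ and one copy of each value in $\{2, 3, \ldots, k - 1\}$) forces the sorted order: the car preferring $k - 1$ must arrive last, and induction on $k$ pins down the remaining arrivals. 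This proves part (2), since $\sigma \in \upf{n}$ is then equivalent to the subsequence of $\sigma$ at the block-$j$ positions coinciding with $\pi_j$ for every $j$. Part (1) follows by counting partitions of the $n$ arrival slots into subsets of sizes $|\pi_1|, \ldots, |\pi_m|$, one per block, namely $\binom{n}{|\pi_1|, \ldots, |\pi_m|}$.

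The hard part will be the within-block rigidity. The pointwise constraint $\sigma_t \in \{t - 1, t\}$ alone is not sufficient; rigidity only emerges by coupling it to the exact multiplicity pattern across all $t$ simultaneously. The cleanest route I see is the induction sketched above: ruling out any position other than $t = k$ for the car with preference $k - 1$ (the placement $t = k - 1$ forces $\sigma_k = k$, which is not in the multiset, and $t < k - 1$ violates the invariant), then applying induction to the block of size $k - 1$.
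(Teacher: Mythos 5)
First, a framing note: the paper does not prove this statement at all --- it is quoted from Bradt et al.\ \cite{bradt2024unit} (their Theorem 2.9), so there is no in-paper proof to compare against; the paper only extracts Corollary~\ref{cor: block internals} from it. Your overall strategy --- pin down the rigid form of each block, localize the parking process to the blocks, and show each block admits a unique admissible internal order --- is the natural one and most of the individual steps are sound. The rigidity computation (granting the small unproved preliminary that $\alpha\in\upf{n}$ forces $\alpha'_i\geq i-1$, which needs its own one-line pigeonhole), the count showing the first $i_{j+1}-1$ cars fill $[1,i_{j+1}-1]$, and the ``top value must arrive last'' induction are all correct, and part (1) does follow from part (2) because the blocks occupy disjoint value ranges, so the position set assigned to each block determines $\sigma$.

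Two steps need repair, both in part (2). First, your block-independence lemma is proved under the hypothesis $\sigma\in\upf{n}$, so it only drives the forward implication ($\sigma\in\upf{n}$ implies the within-block orders are sorted). The converse --- if every block's relative order is respected then $\sigma\in\upf{n}$ --- is not a consequence of that lemma, and it is precisely where the cross-block interaction lives: when $\pi_{j-1}$ is a singleton $(i_j-1)$, its car can legally park in spot $i_j$, inside block $j$'s territory, so ``self-contained on each block's spots'' is exactly what must be re-established in this direction rather than assumed. The fix is a single induction on arrival time showing that, when relative orders are respected, the $r$th block-$j$ car to arrive finds spots $i_j,\dots,i_j+r-2$ occupied and spot $i_j+r-1$ free (the only cars that could have occupied $i_j+r-1$ prefer $i_j+r-2$ or $i_j+r-1$, and by the induction hypothesis those have already parked to its left). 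Second, the prefix-filling invariant (after $t$ within-block arrivals the occupied block spots are exactly $[1,t]$) is asserted but never argued, and it is not automatic. It needs its own pigeonhole: if car $t$ instead parked at some spot $p>t$ with zero displacement, then at least $p-t+1$ cars with preference at most $p-1$ have yet to arrive, but only the $p-t$ spots $[t,p-1]$ of $[1,p]$ remain available to them under unit displacement. With these two arguments inserted, the proof is complete.
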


For our purposes, we only need the following result, which follows from Theorem \ref{thm:upf_rearrangement}. 

\begin{corollary}
\label{cor: block internals}
Let $\alpha \in \upf{n}$ and $b_1\,|\,b_2\,|\,\cdots\,|\,b_m$ be its block structure.
For each $j \in [m]$, let $i_j$ be the minimal element of $b_j$.
Consider any $j \in [m - 1]$.
If $|b_j| = 1$, then $b_j = (i_j)$ and $i_{j+1} = i_j + 1$.
Otherwise, if $|b_j| = 2$, then $b_j = (i_j, i_j)$ and $i_{j+1} = i_j + 2$.
Otherwise, $|b_j| \geq 3$, $b_j = (i_j, i_j, \underbrace{i_j + 1, i_j + 2, \ldots, i_j + |b_j| - 2}_{|b_j| - 2 \text{ terms }})$ and $i_{j+1} = i_j + |b_j|$.
\end{corollary}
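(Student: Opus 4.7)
The plan is to deduce the block structure directly from a simple characterization of weakly increasing unit interval parking functions, namely that $\alpha'_k \in \{k-1, k\}$ for every $k \in [n]$. Once this is established, each of the three cases in the statement reduces to a short case analysis inside a single block.

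First I would prove the following observation: if $\alpha \in \upf{n}$ and $\alpha'=(\alpha_1',\alpha_2',\ldots,\alpha_n')$ is its weakly increasing rearrangement, then $\alpha_k' \in \{k-1,k\}$ for every $k \in [n]$. Since $\alpha'$ is a rearrangement of $\alpha$ respecting the relative order within each block, Theorem~\ref{thm:upf_rearrangement} implies $\alpha' \in \upf{n}$. Because $\alpha'$ is weakly increasing and is a parking function, when the cars park in the order $1,2,\ldots,n$ using preferences $\alpha_1',\alpha_2',\ldots,\alpha_n'$, car $k$ parks exactly in spot $k$. The unit interval condition then forces $|\alpha_k' - k| \le 1$, and since $\alpha_k' \le k$, we get $\alpha_k' \in \{k-1,k\}$.

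Next I would use the definition of the block structure to pin down the entries. By definition, $i_j$ is exactly a position where $\alpha_{i_j}' = i_j$, so the first entry of $b_j$ is $i_j$; meanwhile, for $i_j < k < i_{j+1}$, we must have $\alpha_k' \ne k$ (otherwise $k$ would appear in the sequence of indices $i_1,i_2,\ldots$), so by the observation above $\alpha_k' = k-1$. This immediately yields the three cases:
\begin{itemize}
    \item If $|b_j|=1$, then $b_j=(i_j)$ and the next index in the sequence is $i_{j+1}=i_j+1$.
    \item If $|b_j|=2$, then the second entry of $b_j$ is $\alpha_{i_j+1}' = i_j$, so $b_j=(i_j,i_j)$, and $i_{j+1}=i_j+2$.
    \item If $|b_j|\ge 3$, then for $\ell=1,2,\ldots,|b_j|-1$ the entry $\alpha_{i_j+\ell}' = i_j+\ell-1$, producing $b_j = (i_j,i_j,i_j+1,\ldots,i_j+|b_j|-2)$ and $i_{j+1}=i_j+|b_j|$.
\end{itemize}

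I do not anticipate a serious obstacle: the only nontrivial step is the opening observation that $\alpha_k' \in \{k-1,k\}$, and this is a one-line consequence of the unit interval parking condition applied to the weakly increasing rearrangement. Everything else is bookkeeping forced by the defining property $\alpha_{i_j}'=i_j$ of the block indices.
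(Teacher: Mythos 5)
Your proposal is correct. Note that the paper itself gives no proof of this corollary at all: it simply asserts that the statement ``follows from Theorem~\ref{thm:upf_rearrangement}'' and moves on, so there is no argument to compare yours against line by line. What you supply is a clean, self-contained derivation, and its engine is the observation that for any $\alpha\in\upf{n}$ the weakly increasing rearrangement satisfies $\alpha_k'\in\{k-1,k\}$ for all $k$. That observation is right: $\alpha'$ respects the relative order within each block trivially, so Theorem~\ref{thm:upf_rearrangement}(2) gives $\alpha'\in\upf{n}$; a one-line induction shows car $k$ parks in spot $k$ for a weakly increasing parking function; and the unit-interval condition together with $\alpha_k'\le k$ pins down $\alpha_k'\in\{k-1,k\}$. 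From there the three cases are indeed forced by the definition of the indices $i_1<\cdots<i_m$ (positions where $\alpha_k'=k$), since every other position $k$ must carry the value $k-1$. The only step I would make explicit in a written version is the induction showing that a weakly increasing parking function sends car $k$ to spot $k$, since the unit-interval bound $|\alpha_k'-k|\le 1$ rests on knowing where car $k$ actually parks. Your route has the small added benefit of isolating a reusable inequality characterization of $\upf{n}$ (the two-sided bound $k-1\le\alpha_k'\le k$), which is the natural precursor to the paper's Corollary~\ref{cor: characterization} for $\ufr{n}$.
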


We now give a characterization of unit Fubini rankings based on their block structure.
We employ this technical result in our proof of Theorem~\ref{thm:nkThm}.

\begin{restatable}{rethm}{intersectionThm}
\label{thm: intersection}
Let $\alpha \in \upf{n}$ and $b_1 \,|\, b_2 \,|\, \cdots \,|\, b_m$ be its block structure.
Then, $\alpha \in \ufr{n}$ if and only if $|b_j| \leq 2$ for each $j \in [m]$.
\end{restatable}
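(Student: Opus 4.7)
The plan is to combine Lemma~\ref{fubini perm invariant}, which guarantees that checking the Fubini ranking condition on the weakly increasing rearrangement $\alpha'$ is equivalent to checking it on $\alpha$, with Corollary~\ref{cor: block internals}, which gives an explicit description of every block $b_j$ of a unit interval parking function. Throughout I would use the notation of Definition~\ref{def: block structure} and write $i_j$ for the minimal element of the block $b_j$, so that $\alpha' = b_1 \,|\, b_2 \,|\, \cdots \,|\, b_m$.

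First I would handle the forward direction. Assuming $|b_j|\le 2$ for every $j\in[m]$, Corollary~\ref{cor: block internals} says each block of $\alpha'$ is either a singleton $(i_j)$ or a pair $(i_j, i_j)$, and that $i_{j+1}=i_j+|b_j|$. Thus the value $i_j$ appears exactly $|b_j|$ times in $\alpha'$, and the $|b_j|-1$ ranks immediately following $i_j$ are skipped before the next distinct value $i_{j+1}$ appears. This is precisely the condition of Definition~\ref{def:Fubini ranking}, so $\alpha'\in\fr{n}$, and Lemma~\ref{fubini perm invariant} then gives $\alpha\in\fr{n}\cap\upf{n}=\ufr{n}$.

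Next I would establish the contrapositive for the reverse direction. Suppose $|b_j|\ge 3$ for some $j\in[m]$. By Corollary~\ref{cor: block internals},
\begin{equation*}
b_j=(i_j,\,i_j,\,i_j+1,\,i_j+2,\,\ldots,\,i_j+|b_j|-2),
\end{equation*}
so the value $i_j$ appears twice (i.e., is tied) while the value $i_j+1$ also appears inside $b_j$. This violates the Fubini rule that the rank $i_j+1$ must be disallowed when at least two competitors share rank $i_j$, so $\alpha'\notin\fr{n}$, and again by Lemma~\ref{fubini perm invariant} we conclude $\alpha\notin\ufr{n}$.

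The argument is essentially a direct application of the block-structure characterization, so there is no substantive obstacle. The only care required is bookkeeping: one must keep the two meanings of the word ``rank'' (Fubini ranks versus poset ranks) distinct, and one must be mindful that Corollary~\ref{cor: block internals} is stated for non-terminal blocks ($j \in [m-1]$), so I would note that the terminal block $b_m$ admits the same analysis since its internal structure depends only on $i_m$ and $|b_m|$ and the gap condition to $i_{m+1}$ is vacuous.
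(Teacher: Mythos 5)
Your proof is correct and takes essentially the same route as the paper's: both directions rest on Corollary~\ref{cor: block internals}, with the forward implication checking that consecutive blocks of size at most two respect the tie/gap rule of Definition~\ref{def:Fubini ranking}, and the converse proved contrapositively from the fact that a block of size at least three contains $i_j$ twice together with $i_j+1$. Your explicit appeal to Lemma~\ref{fubini perm invariant} and your remark about the terminal block are harmless refinements of the same argument.
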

\begin{proof}
First, suppose $|b_j| \leq 2$ for each $j \in [m]$.
We need to show that $\alpha \in \ufr{n}$.
To do this, it suffices to show that for each pair $b_j, b_{j+1}$ of consecutive blocks with $j \in [m-1]$, the existence of competitors whose ranks correspond to the block $b_j$ does not disallow there being a competitor whose rank is the minimal element of block $b_{j+1}$.
Consider any such pair $b_j, b_{j+1}$ of consecutive blocks and let $i_j$ and $i_{j+1}$ be the minimal elements of blocks $b_j$ and $b_{j+1}$, respectively.
If $|b_j| = 1$, then by Corollary~\ref{cor: block internals} we know that $b_j = (i_j)$ and $i_{j+1} = i_j + 1$, so there being a competitor whose rank is $i_j$ does not disallow there being a competitor whose rank is $i_{j+1} = i_j + 1$.
If $|b_j| = 2$, then by Corollary~\ref{cor: block internals} we know that $b_j = (i_j, i_j)$ and $i_{j+1} = i_j + 2$, so there being two competitors whose ranks are both $i_j$ does not disallow there being a competitor whose rank is $i_{j+1} = i_j + 2$.

Now, suppose $|b_j| = k > 2$ for some $j \in [m]$.
We need to show that $\alpha \notin \ufr{n}$.
Let $i_j$ be the minimal element of block $b_j$ so that, by Corollary~\ref{cor: block internals}, $b_j = (i_j, i_j, i_j + 1, \ldots, i_j + k)$.
In $b_j$, $i_j$ appears twice while $i_j + 1$ appears once.
Therefore, similarly in $\alpha$, $i_j$ appears twice while $i_j + 1$ appears once.
This implies that $\alpha \notin \ufr{n}$, since there being two competitors whose ranks are both $i_j$ disallows the subsequent rank $i_j + 1$, which some competitor supposedly holds.
\end{proof}
The following corollary gives an alternative way to state Theorem \ref{thm: intersection}.
\begin{corollary}
\label{lemma: ufr intersection}
Let $\alpha\in\upf{n}$. Then $\alpha\in\ufr{n}$ if and only if $\alpha$ is a Fubini ranking with the additional constraint that ranks are shared by at most two competitors. 
\end{corollary}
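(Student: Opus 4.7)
The plan is to derive this corollary directly from Theorem~\ref{thm: intersection} by translating the block-size characterization into an equivalent statement about the multiplicity with which each rank appears in $\alpha$. Since $\ufr{n} = \fr{n} \cap \upf{n}$ by definition, under the standing hypothesis $\alpha \in \upf{n}$ the membership $\alpha \in \ufr{n}$ reduces to whether $\alpha$ is a Fubini ranking; the corollary simply reformulates this property using a rank-multiplicity condition whose equivalence with the block-size condition follows from Corollary~\ref{cor: block internals}.

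For the forward direction, I would assume $\alpha \in \ufr{n}$, so in particular $\alpha \in \fr{n}$ and $\alpha$ is a Fubini ranking by definition. Theorem~\ref{thm: intersection} then guarantees that every block $b_j$ in the block structure of $\alpha$ satisfies $|b_j| \leq 2$. Combined with Corollary~\ref{cor: block internals}, a block of size $1$ has the form $(i_j)$ and contributes the rank $i_j$ exactly once, while a block of size $2$ has the form $(i_j,i_j)$ and contributes the rank $i_j$ exactly twice. Because the minima $i_j$ strictly increase with $j$ and the blocks partition the entries of the weakly increasing rearrangement of $\alpha$, ranks coming from different blocks are distinct. Hence no rank in $\alpha$ is shared by more than two competitors.

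For the backward direction, suppose $\alpha \in \upf{n}$ is a Fubini ranking with the additional constraint that each rank is shared by at most two competitors. Being a Fubini ranking gives $\alpha \in \fr{n}$, and by hypothesis $\alpha \in \upf{n}$, so $\alpha \in \fr{n} \cap \upf{n} = \ufr{n}$ by definition. The rank-multiplicity restriction is actually redundant in this direction; I would include it in the statement only because the corollary records it as a logically equivalent characterization rather than a hypothesis strictly needed to deduce membership in $\ufr{n}$.

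I do not anticipate a substantive obstacle. The real content is already captured by Theorem~\ref{thm: intersection} together with the explicit description of blocks in Corollary~\ref{cor: block internals}; the only item requiring any care is confirming that bounding every block's size by $2$ globally bounds every rank's multiplicity by $2$, and conversely that a violation of $|b_j| \leq 2$ forces some rank (the minimum $i_j$ of that block) to repeat in a way incompatible with being a Fubini ranking. Both observations read off directly from Corollary~\ref{cor: block internals}.
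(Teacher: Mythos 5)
Your proposal is correct and follows exactly the route the paper intends: the paper states this corollary without proof as ``an alternative way to state Theorem~\ref{thm: intersection},'' and your argument simply makes that restatement explicit by combining Theorem~\ref{thm: intersection} with the block descriptions in Corollary~\ref{cor: block internals} (forward direction) and the definition $\ufr{n}=\fr{n}\cap\upf{n}$ (backward direction). Your observation that the multiplicity constraint is redundant in the backward direction under the hypothesis $\alpha\in\upf{n}$ is also accurate.
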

We also give an inequality description of unit Fubini rankings.
\begin{corollary}
\label{cor: characterization}
Let $\alpha=(a_1,a_2,\ldots,a_n)\in[n]^n$ and $\alpha'=(a_1',a_2',\ldots,a_n')$ be its weakly increasing rearrangement.
Then, $\alpha \in \ufr{n}$ if and only if $c_i\leq a_i'\leq i$ for each $i\in[n]$, where 
\[c_i=\begin{cases}
1,&\mbox{if $i=1$}\\
i,&\mbox{if $a_{i-1}'=i-2$ and $2\leq i\leq n$}\\
i-1,&\mbox{otherwise}.
\end{cases}\]
\end{corollary}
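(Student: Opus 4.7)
The plan is to reduce both directions of the equivalence to the block-structure characterization of $\ufr{n}$ given by Theorem~\ref{thm: intersection} together with the explicit block description in Corollary~\ref{cor: block internals}. The key observation that makes the argument clean is that $c_i \in \{i-1, i\}$ for $i \geq 2$, so the hypothesized inequality $c_i \leq a_i' \leq i$ forces $a_i' \in \{i-1, i\}$ for every such $i$, while $a_1' = 1$ is forced directly. This means every position $i$ will be classified as \emph{type H} (when $a_i' = i$) or \emph{type L} (when $a_i' = i-1$), and the condition on $c_i$ is simply that a type-L position cannot be immediately followed by another type-L position.

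For the forward direction, I would assume $\alpha \in \ufr{n}$ and note that $a_i' \leq i$ is the parking-function condition. Applying Theorem~\ref{thm: intersection} and Corollary~\ref{cor: block internals}, each block in the block structure of $\alpha$ is either a singleton $(i_j)$ or a pair $(i_j, i_j)$, so every position $i$ is either the start of a new block (giving $a_i' = i$) or the second entry of a 2-block (giving $a_i' = i-1$). Splitting on the conditional defining $c_i$: if $a_{i-1}' = i-2$, then position $i-1$ must be the second entry of a 2-block with value $i-2$ (since a singleton at position $i-1$ would give value $i-1$), so by Corollary~\ref{cor: block internals} the next block starts at position $(i-2) + 2 = i$, yielding $a_i' = i = c_i$; otherwise $a_{i-1}' = i-1$, and either choice for position $i$ satisfies $a_i' \geq i-1 = c_i$.

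For the backward direction, I would assume $c_i \leq a_i' \leq i$ for all $i$ and use the classification above into H and L positions. Let $i_1 < i_2 < \cdots < i_m$ be the H-positions; these are precisely the positions satisfying $a_{i_j}' = i_j$ in Definition~\ref{def: block structure}, and thus index the blocks of $\alpha$. To verify each block has size at most 2, I would show that whenever position $i_j + 1 \leq n$ is type L (so $a_{i_j+1}' = i_j$), the hypothesis applied at $i = i_j+2$ (assuming $i_j + 2 \leq n$) gives $c_{i_j+2} = i_j+2$ because $a_{i_j+1}' = i_j = (i_j+2)-2$, forcing $a_{i_j+2}' = i_j+2$ and hence starting a new block. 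The boundary cases $i_j + 1 > n$ and $i_j + 2 > n$ are immediate. Theorem~\ref{thm: intersection} then yields $\alpha \in \ufr{n}$.

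The main obstacle, though modest, is pinning down that $a_{i-1}' = i-2$ corresponds precisely to position $i-1$ being the second entry of a 2-block with minimal value $i-2$; this requires combining the identity $i_{j+1} = i_j + |b_j|$ from Corollary~\ref{cor: block internals} with the size-at-most-2 constraint from Theorem~\ref{thm: intersection}. Beyond this translation between the inequality formulation and the block structure, the argument is mostly careful bookkeeping.
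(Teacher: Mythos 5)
Your proposal is correct and follows essentially the same route as the paper: both directions are reduced to the block-structure characterization of Theorem~\ref{thm: intersection} (with Corollary~\ref{cor: block internals} supplying the explicit block shapes), translating between the inequality $c_i\leq a_i'\leq i$ and the condition that all blocks have size at most two. Your H/L classification and the observation that the $c_i$ condition forbids two consecutive L positions is just a more explicit rendering of the bookkeeping the paper leaves terse.
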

\begin{proof}
First, let $\alpha \in \ufr{n}$. 
Then, by Theorem~\ref{thm: intersection}, the block structure $b_1 \,|\, b_2 \,|\, \cdots \,|\, b_m$ of $\alpha$ satisfies $|b_j| \leq 2$ for each $j \in [m]$.
This implies that $c_i \leq a_i' \leq i$ for each $i \in [n]$.   

Now, let $\alpha \in [n]^n$ such that $c_i \leq a_i' \leq i$ for all $i \in [n]$. 
This implies that each number $i \in [n]$ occurs at most twice in $\alpha$.
Moreover, if $i \in [n]$ occurs twice, then the next smallest number, if any, is $i+2$. 
This implies that the block structure $b_1 \,|\, b_2 \,|\, \cdots \,|\, b_m$ of $\alpha$ satisfies $|b_j| \leq 2$ for each $j \in [m]$.
By Theorem~\ref{thm: intersection}, this implies $\alpha \in \ufr{n}$. 
\end{proof}

We now take a quick aside to provide a connection between unit Fubini rankings and the Fibonacci numbers (OEIS \href{https://oeis.org/A000045}{A000045}), defined by $F_{n+1}=F_{n}+F_{n-1}$ for $n\geq 2$ and $F_1=F_2=1$.
\begin{restatable}{rethm}{fubFibThm}
\label{thm:fubFib}
Let $\ufr{n}^{\uparrow}$ be the set of weakly increasing unit Fubini rankings of length $n$. 
Then, for $n \geq 1$ we have
\begin{equation*}
    |\ufr{n}^{\uparrow}|=F_{n+1},
\end{equation*}
where $F_{n+1}$ is the $(n+1)$th Fibonacci number.
\end{restatable}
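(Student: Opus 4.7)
The plan is to identify weakly increasing unit Fubini rankings of length $n$ with compositions of $n$ whose parts lie in $\{1,2\}$, and then invoke the classical fact that such compositions are enumerated by $F_{n+1}$.

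First, I would observe that if $\alpha \in \ufr{n}^{\uparrow}$, then $\alpha$ coincides with its own weakly increasing rearrangement, so its block structure $b_1 \,|\, b_2 \,|\, \cdots \,|\, b_m$ is literally $\alpha$ itself. By Theorem~\ref{thm: intersection}, each block satisfies $|b_j| \leq 2$, and by Corollary~\ref{cor: block internals}, a block starting at value $i_j$ is either $(i_j)$ with $i_{j+1} = i_j + 1$, or $(i_j, i_j)$ with $i_{j+1} = i_j + 2$. Since $i_1 = 1$, the sequence of starting values is fully determined by the sequence of block sizes.

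Next, I would define a map
\[
\Phi\colon \ufr{n}^{\uparrow} \longrightarrow \{\text{compositions of }n\text{ with parts in }\{1,2\}\}
\]
by $\Phi(\alpha) = (|b_1|, |b_2|, \ldots, |b_m|)$. This is well-defined since $\sum_j |b_j| = n$. Conversely, given a composition $(s_1, s_2, \ldots, s_m)$ of $n$ with each $s_j \in \{1,2\}$, one reconstructs $\alpha$ by setting $i_1 = 1$ and, for $j \geq 1$, placing block $(i_j)$ when $s_j = 1$ and block $(i_j, i_j)$ when $s_j = 2$, followed by $i_{j+1} = i_j + s_j$. The resulting tuple satisfies $c_i \le a_i' \le i$ (in the notation of Corollary~\ref{cor: characterization}) and is therefore in $\ufr{n}^{\uparrow}$. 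Hence $\Phi$ is a bijection.

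Finally, letting $g(n)$ denote the number of compositions of $n$ with parts in $\{1, 2\}$, conditioning on the first part yields $g(n) = g(n-1) + g(n-2)$ for $n \geq 3$, with base cases $g(1) = 1$ and $g(2) = 2$. This coincides with the Fibonacci recurrence and gives $g(n) = F_{n+1}$, so $|\ufr{n}^{\uparrow}| = F_{n+1}$. There is no real obstacle: the only subtle point is verifying that the reconstruction step always lands in $\ufr{n}^{\uparrow}$, which is precisely what Theorem~\ref{thm: intersection} and Corollary~\ref{cor: block internals} guarantee.
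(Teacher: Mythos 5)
Your proposal is correct and is essentially the paper's own argument: both reduce the problem to the block structure (each block has size $1$ or $2$ by Theorem~\ref{thm: intersection} and Corollary~\ref{cor: block internals}) and then derive the Fibonacci recurrence, the only cosmetic difference being that you route through compositions of $n$ with parts in $\{1,2\}$ and condition on the first part, while the paper conditions directly on the last block. No gaps.
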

\begin{proof}
We will show that $|\ufr{n}^{\uparrow}|=|\ufr{n-1}^{\uparrow}|+|\ufr{n-2}^{\uparrow}|$, $|\ufr{2}^{\uparrow}| = 2$, and $|\ufr{1}^{\uparrow}|= 1$.
By Theorem~\ref{thm: intersection}, the block structure of any unit Fubini ranking has blocks of size at most two. 
Moreover, for any $n \in \N$, each $\alpha \in \ufr{n}$ satisfies $|\{i \in [n]: a_i = n\}| \leq 1$.
That is, among $n$ competitors, at most one is ranked $n$.
Therefore, to compute $|\ufr{n}^{\uparrow}|$, we need only consider forming a block of size two in which 2 participants tie and rank $n-1$ to any $\beta \in \ufr{n-2}^{\uparrow}$, or appending a block of size one with rank $n$ to any $\gamma \in \ufr{n-1}^{\uparrow}$.
These cases are disjoint and exhaustive, and therefore give the required recursion relation.
To conclude, note $|\ufr{1}^{\uparrow}|=|\{(1)\}|=1$ and 
$|\ufr{2}^{\uparrow}|=|\{(1,1),(1,2)\}|=2$.
\end{proof}

Lastly, we describe a set of functions on unit Fubini rankings used in future sections to establish Theorem~\ref{thm:nkThm}. 
\begin{definition}
\label{def: delta}
For each $i\in[n-1]$ define $\delta_i: \ufr{n}\to\ufr{n}$  by
\begin{equation}
\label{eq: delta}
    \delta_i(\alpha) =
    \begin{cases}
        \alpha, & \text{if } \substack{|\{j : a_j = i-1\}| = 2 \\ \text{ or } |\{j : a_j = i\}| = 2 \\ \text{ or } |\{j : a_j = i+1\}| = 2} \\
        \widehat{\alpha}(i), & \text{otherwise;}
    \end{cases}
\end{equation}
where $\widehat{\alpha}(i)$ is obtained from $\alpha$ by decreasing the singular occurrence of $i+1$ to $i$.
\end{definition}
For example, if $\alpha=(1,3,5,3,6,1,7)$, then $\delta_i(\alpha) = \alpha$, for $1\leq i\leq 4$, while
\begin{itemize}
    \item $\delta_5(\alpha)=\widehat{\alpha}(5)=(1,3,5,3,5,1,7)$, because $6$ occurs exactly once in $\alpha$ and
    \item $\delta_6(\alpha)=\widehat{\alpha}(6)=(1,3,5,3,6,1,6)$, because $7$ occurs exactly once in $\alpha$.
\end{itemize}
One can readily confirm that all of the tuples above are in $\ufr{7}$.
This illustrates the next result.
\begin{lemma}
The functions $\delta_i$ for $i \in [n-1]$ are well-defined.
\end{lemma}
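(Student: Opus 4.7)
The statement has two contents: first, that the expression $\widehat{\alpha}(i)$ in the ``otherwise'' branch of Equation~\eqref{eq: delta} is meaningful, i.e., the value $i+1$ really does occur exactly once in $\alpha$; and second, that the output $\delta_i(\alpha)$ lies in $\ufr{n}$. The first branch of \eqref{eq: delta} is trivial, so all the work is in the otherwise branch. My plan is to run the whole argument through the block structure $b_1\,|\,b_2\,|\,\cdots\,|\,b_m$ of $\alpha$ guaranteed by Theorem~\ref{thm: intersection} (so $|b_j|\le 2$ for all $j$) together with the jump relation $i_{j+1}-i_j\in\{1,2\}$ between consecutive block minima coming from Corollary~\ref{cor: block internals}.

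\textbf{Step 1 (the value $i+1$ occurs exactly once).} Since $\alpha\in\ufr{n}$, each value appears $0$, $1$, or $2$ times in $\alpha$. The branch condition rules out $2$ for $i+1$, so I only need to rule out $0$. If $i+1\notin\{i_1,\ldots,i_m\}$, then either (a) there is some $j$ with $i_j=i$ and $i_{j+1}=i+2$, forcing $|b_j|=2$ and hence $i$ appearing twice; or (b) $i+1>i_m$, in which case the last-block analysis forces $i_m=n-1$, $|b_m|=2$, and $i=n-1$, again with $i$ appearing twice. Both possibilities violate the branch condition, so $i+1$ occurs exactly once, and it must be the sole entry of some singleton block $b_{j^*}=(i+1)$ with $i_{j^*}=i+1$.

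\textbf{Step 2 (the value $i$ occurs exactly once).} Because $i\ge 1$ we have $i_{j^*}=i+1\ge 2$, so $j^*\ge 2$. Looking at $b_{j^*-1}$, Corollary~\ref{cor: block internals} gives two options: $|b_{j^*-1}|=1$ with $i_{j^*-1}=i$, or $|b_{j^*-1}|=2$ with $i_{j^*-1}=i-1$. The latter makes $i-1$ occur twice (if $i\ge 2$; when $i=1$ this branch is already excluded by $i_{j^*-1}\ge 1$), contradicting the otherwise branch. Hence $b_{j^*-1}=(i)$ and $i$ appears exactly once in $\alpha$.

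\textbf{Step 3 (the output lies in $\ufr{n}$).} Replacing the unique $i+1$ by $i$ merges the two consecutive singleton blocks $(i)$ and $(i+1)$ into a single block $(i,i)$ of size two at position $i$, leaving all other blocks and their gaps untouched; in particular the next minimum is still $i_{j^*+1}=i+2$ (or there is no next block). The weakly increasing rearrangement remains weakly increasing, the inequalities $a_k'\le k$ are preserved (one entry was decreased), and every block in the new structure still has size at most two, so Theorem~\ref{thm: intersection} delivers $\widehat{\alpha}(i)\in\ufr{n}$.

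\textbf{Main obstacle.} The only delicate point is Step~1: eliminating the possibility that $i+1$ is entirely absent from $\alpha$. This requires enumerating precisely how a value can be skipped in a unit Fubini ranking (a size-two block creating a gap, or falling past the largest block minimum $i_m\in\{n-1,n\}$) and checking that each mode of absence forces a double occurrence that the branch condition has forbidden. Once Step~1 is secured, Steps~2 and 3 are short block-structure bookkeeping.
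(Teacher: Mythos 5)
Your proof is correct and follows the same route as the paper: the ``otherwise'' branch is handled by showing that the only change to the block structure is the merger of the two singleton blocks $(i)$ and $(i+1)$ into a single block $(i,i)$, after which the characterization of $\ufr{n}$ by block sizes finishes the argument. Where you go beyond the paper is Steps~1 and~2: the paper's proof simply asserts that $(i)$ and $(i+1)$ are adjacent singleton blocks, whereas you actually verify that $i$ and $i+1$ each occur \emph{exactly} once under the branch condition. This is a real gap you are filling --- a value $v\geq 2$ is absent from a unit Fubini ranking precisely when $v-1$ occurs twice (by Corollary~\ref{cor: block internals}), so the branch condition forbidding a doubled $i$ forces $i+1$ to be present, and forbidding a doubled $i-1$ forces $i$ to be present; your two-case analysis in Step~1 is a slightly longer way of saying exactly this. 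The one soft spot is Step~3: Theorem~\ref{thm: intersection} has the hypothesis $\widehat{\alpha}(i)\in\upf{n}$, and checking only the parking-function inequalities $a_k'\leq k$ does not by itself establish membership in $\upf{n}$ (that class is not closed under rearrangement). The cleaner citation is Corollary~\ref{cor: characterization}, whose inequality description $c_k\leq a_k'\leq k$ applies to arbitrary tuples in $[n]^n$ and is immediate from your block-structure bookkeeping; to be fair, the paper's own proof is equally informal at this exact point.
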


\begin{proof}
Let $\alpha \in \ufr{n}$ and let $b_1 \,|\, b_2 \,|\, \cdots \,|\,b_m$ be its block structure.
Consider any fixed but arbitrary $i\in[n-1]$.
We need to show that $\delta_i(\alpha) \in \ufr{n}$.
There are two possibilities.

\noindent\textbf{Case 1}: 
Suppose $\delta_i(\alpha) = \alpha$.
The claim holds since $\alpha \in \ufr{n}$, by assumption.

\noindent\textbf{Case 2}: 
Suppose $\delta_i(\alpha) = \widehat{\alpha}(i)$. 
By definition of $\delta_i$ this means that if $i-1$, $i$, or $i+1$
appear in $\alpha$, then they appear at most once.
In addition, by Corollary~\ref{cor: characterization}, if $i+2\leq n$, then $i+2$ appears at least once in $\alpha$. 
The only change that $\delta_i$ makes to obtain $\widehat{\alpha}(i)$ from $\alpha$ occurs at the value $i+1$, which is decreased to $i$; all other entries of $\alpha$ remain unchanged.
Therefore, the only change that $\delta_i$ makes to the block structure $b_1 \,|\, b_2 \,|\, \cdots \,|\,b_m$ 
is that 
the adjacent singleton blocks containing $(i)$ and $(i+1)$
are turned into a single block of size $2$ containing $(i,i)$.
Then, Corollary~\ref{cor: block internals} guarantees that $\widehat{\alpha}(i)\in\upf{n}$ while, in turn, Theorem~\ref{thm: intersection} guarantees that $\widehat{\alpha}(i)\in\ufr{n}$, as claimed.
\end{proof}

Next we show that the functions of Definition~\ref{def: delta} commute whenever their domain is restricted to the set of permutations and they are applied on nonconsecutive indices.
\begin{restatable}{rethm}{lem: commute}
\label{lem: commute}
Let  $i , j \in [n-1]$ be nonconsecutive. If $\pi \in \Sym_n$, then $\delta_i(\delta_j(\pi)) = \delta_j(\delta_i(\pi))$.
\end{restatable}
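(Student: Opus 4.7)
The plan is to exploit the fact that on a permutation $\pi \in \Sym_n$ the operation $\delta_i$ admits a very simple description, and then to track multiplicities carefully in a short case analysis on $|i-j|$. Since every value of $[n]$ occurs exactly once in $\pi$, no doubling condition in Definition~\ref{def: delta} can trigger, and so $\delta_i(\pi) = \widehat{\pi}(i)$: the unique entry equal to $i+1$ is replaced by $i$, producing a tuple with exactly two $i$'s, no $i+1$, and every other value of $[n]$ still appearing once at its original position.

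Next, I would split into two cases according to the distance $|i-j| \ge 2$ imposed by nonconsecutivity. When $|i-j| \ge 3$, the triples $\{i-1,i,i+1\}$ and $\{j-1,j,j+1\}$ are disjoint, so applying $\delta_j$ to $\pi$ leaves the multiplicities of $i-1, i, i+1$ unchanged at $1$. Thus $\delta_i$ again falls into the ``otherwise'' branch and decreases the unique $i+1$ to $i$; since the positions modified by the two operations correspond to disjoint values, the two orders of composition visibly agree.

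The case $|i-j|=2$ is where I expect the main obstacle. Taking, without loss of generality, $j=i+2$, the two triples overlap exactly at $i+1 = j-1$. The key is to verify that the removal of the unique $i+1$ by $\delta_i$ does not trigger any doubling condition for $\delta_j$: after $\delta_i(\pi)$, the multiplicities of $j-1, j, j+1$ are $0,1,1$ (using that $j+1 \le n$ because $j \le n-1$), none of which equal $2$, so $\delta_j$ proceeds to decrease the unique $i+3$ to $i+2$. Symmetrically, $\delta_j$ does not disturb the multiplicities of $i-1, i, i+1$, which remain at $1,1,1$, so $\delta_i$ decreases the unique $i+1$ to $i$. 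Both orders produce the same final tuple, with two $i$'s, no $i+1$, two $(i+2)$'s, no $(i+3)$, and all other entries in the same positions as in $\pi$. The subtlety to guard against is precisely this overlap at $i+1$; however, because the triggering condition in Definition~\ref{def: delta} is ``$=2$'' rather than ``$\ne 1$'', the multiplicity dropping from $1$ to $0$ is harmless, and the two compositions agree position by position.
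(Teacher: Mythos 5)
Your proof is correct and follows essentially the same route as the paper's: a direct case analysis on $|i-j|$ verifying that neither of $\delta_i,\delta_j$ can trigger the other's fixed-point condition, with the only delicate sub-case being $|i-j|=2$. The paper phrases the bookkeeping via the block structure of the weakly increasing rearrangement (reducing to the identity permutation by permutation-invariance of Fubini rankings), whereas you track value multiplicities and positions directly, but the substance is the same.
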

\begin{proof}
Fix nonconsecutive integers $i, j \in [n-1]$. 
Without loss of generality, let $i<j$.
By Lemma~\ref{fubini perm invariant}, it suffices to consider only the identity permutation $\pi = 12\cdots n$ and the block structure of $\pi$ is $b_1 \,|\, b_2 \,|\, \cdots \,|\, b_n$ with singleton blocks $b_i = (i)$ for each $i \in [n]$.

If $\delta_i(\pi)$ has the block structure $1\,|\,2\,|\,\cdots\,|\,i-1\,|\,i\,i\,|\,i+2\,|\,\cdots\,|\,n-1\,|\,n$, 
then, since $i < j$, the block structure of $\delta_j(\delta_i(\pi))$ is
\[
1\,|\,2\,|\,\cdots\,|\,i-1\,|\,i\,i\,|\,i+2\,|\,\cdots\,|\,j-1\,|\,j\,j\,|\,j+2\,|\,\cdots\,|\,n-1\,|\,n.
\] 
If $i+2=j$, then the block structure would be
\[
1\,|\,2\,|\,\cdots\,|\,i-1\,|\,i\,i\,|\,j\,j\,|\,j+2\,|\,\cdots\,|\,n-1\,|\,n.
\]
On the other hand, 
$\delta_j(\pi)$ has the block structure \[1\,|\,2\,|\,\cdots\,|\,j-1\,|\,j\,j\,|\,j+2\,|\,\cdots\,|\,n-1\,|\,n.\] 
Then, since $i < j$, the block structure of  $\delta_i(\delta_j(\pi))$ is \[1\,|\,2\,|\,\cdots\,|\,i-1\,|\,i\,i\,|\,i+2\,|\,\cdots\,|\,j-1\,|\,j\,j\,|\,j+2\,|\,\cdots\,|\,n-1\,|\,n.\] 
Again, if $i+2=j$, then the block structure would be
\[1\,|\,2\,|\,\cdots\,|\,i-1\,|\,i\,i\,|\,j\,j\,|\,j+2\,|\,\cdots\,|\,n-1\,|\,n.\]
Therefore, for $\pi = 12\cdots n$, we have $\delta_i(\delta_j(\pi)) = \delta_j(\delta_i(\pi))$. 
Finally, for any $\pi \neq 12\cdots n$, the blocks $(i,i)$ and $(j,j)$ will be in the positions where the consecutive blocks $\cdots\,|\,i\,|\,i+1\,|\,\cdots$ and $\cdots\,|\,j\,|\,j+1\,|\,\cdots$ originally appeared, respectively.
\end{proof}

\begin{remark}
\label{rem:nocommute}
    In Theorem~\ref{lem: commute}, it is important that $i$ and $j$ are nonconsecutive. 
    To see this, let $\pi \in \Sym_n$ and $j=i+1$. 
    Then, the block structure of $\pi$ changes in the following way upon application of $\delta_{i+1}$ followed by $\delta_i$:
    \begin{align}\label{i}
    \delta_i(\delta_{i+1}(\pi))=\delta_i(\cdots\,|\,i-1\,|\,j\,j\,|\,i+2\,|\,\cdots) 
    = \cdots\,|\,i-1\,|\,i+1\,\,\,\,\,i+1\,|\,i+2\,|\,\cdots.
    \end{align}
    On the other hand, 
    the block structure of $\pi$ changes in the following way upon application of $\delta_{i}$ followed by $\delta_{i+1}$:
    \begin{align}
        \delta_{i+1}(\delta_i(\pi))=\delta_{i+1}(\cdots\,|\,i-1\,|\,i\,i\,|\,i+2\,|\,\cdots)=\cdots\,|\,i-1\,|\,i\,i\,|\,i+2\,|\,\cdots.\label{ii} 
    \end{align}
\end{remark}

We now generalize Definition~\ref{def: delta} to be indexed by subsets consisting of nonconsecutive integers.

\begin{definition}
    Let $I=\{i_1,i_2,\ldots,i_k\} \subset [n-1]$ be a set of pairwise nonconsecutive integers satisfying $i_1<i_2<\cdots<i_k$. 
    If $\pi \in \Sym_n$, then we define the composition 
\begin{align}
\label{eq: composition}
\delta_I(\pi)\coloneqq\delta_{i_1}\circ \delta_{i_2}\circ\cdots\circ\delta_{i_k}(\alpha).
\end{align} 
If $I=\emptyset$, then $\delta_I = \mbox{Id}$ is the identity map on $\Sym_n$.
\end{definition}

It follows from Theorem \ref{lem: commute} that the composition defined in Equation~\eqref{eq: composition} can be done in any order.

\begin{corollary}\label{lem:composition}
    Let $I=\{i_1,i_2,\ldots,i_k\}\subseteq[n-1]$ 
be a set of nonconsecutive integers. If $\pi\in \Sym_n$, 
then the composition $    \delta_I(\pi)\in \ufr{n}$.
\end{corollary}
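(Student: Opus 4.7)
The plan is to prove Corollary~\ref{lem:composition} by a straightforward iteration of the well-definedness lemma that immediately follows Definition~\ref{def: delta}. The idea is to peel off one operator $\delta_{i_j}$ at a time, starting from $\pi$ and verifying that each intermediate tuple remains in $\ufr{n}$, until all $k$ operators have been applied.

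The first step is to observe that $\Sym_n \subseteq \ufr{n}$. A permutation $\pi \in \Sym_n$ contains each value of $[n]$ exactly once, so no two competitors tie and the Fubini ranking condition of Definition~\ref{def:Fubini ranking} is trivially met, while every car parks in its preferred spot, making $\pi$ a unit interval parking function. Equivalently, the weakly increasing rearrangement of $\pi$ is $12\cdots n$ with block structure $1 \mid 2 \mid \cdots \mid n$, all blocks of size $1$, so Theorem~\ref{thm: intersection} yields $\pi \in \ufr{n}$.

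Next, I would set $\alpha^{(k)} \coloneqq \pi$ and, for $j = k, k-1, \ldots, 1$, define $\alpha^{(j-1)} \coloneqq \delta_{i_j}(\alpha^{(j)})$. The well-definedness lemma guarantees $\alpha^{(j-1)} \in \ufr{n}$ whenever $\alpha^{(j)} \in \ufr{n}$, so by induction on $j$, $\delta_I(\pi) = \alpha^{(0)} \in \ufr{n}$. A useful sanity check at the block-structure level is that each $\delta_{i_j}$ merges the singletons at values $i_j$ and $i_j + 1$ into a size-$2$ block $(i_j, i_j)$, and the pairwise nonconsecutivity of $i_1, \ldots, i_k$ keeps the triples $\{i_j - 1, i_j, i_j + 1\}$ and $\{i_{j'} - 1, i_{j'}, i_{j'} + 1\}$ sufficiently separated that later merges always find their target values still singleton blocks.

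There is no significant obstacle; the heavy lifting was done in the well-definedness lemma, and the present argument amounts to applying it $k$ times. The same block-structure picture also underlies the remark preceding the corollary: since the merges act on disjoint pairs of values, the composition \eqref{eq: composition} is order-independent by the same reasoning as in Theorem~\ref{lem: commute}.
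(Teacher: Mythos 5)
Your proposal is correct and matches the paper's intent: the paper states this as an immediate corollary, with the real work done in the well-definedness lemma (each $\delta_i$ maps $\ufr{n}$ to $\ufr{n}$) and Theorem~\ref{lem: commute} (order-independence of the composition), and your induction peeling off one $\delta_{i_j}$ at a time, starting from the observation $\Sym_n \subseteq \ufr{n}$, is exactly that argument made explicit. No gaps.
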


\section{Bijection}
\label{sec: bijection}

By definition,  $\ufr{n}\subseteq \upf{n}$, hence, we can treat unit Fubini rankings as parking functions.
We define the outcome map $\Out: \ufr{n} \rightarrow \Sym_n$ by $\Out(\alpha)=\pi=\pi_1\pi_2\cdots\pi_n$
where $\pi\in\Sym_n$ is written in one-line notation and denotes the order in which the cars park on the street. 
That is, if $j\in[n]$, then $\pi_j=i$ denotes that car $i$ is the $j$th car parked on the street.  
Given $\pi\in \Sym_n$, we define the fiber of the outcome map:
\[\Out^{-1}(\pi)=
\{\alpha\in\ufr{n}: \Out(\alpha)=\pi\}.
\]

\begin{remark}\label{rem:OutWellDef}
    Since no car can park in more than one spot, $\Out$ is a well-defined map.
\end{remark}

In what follows, we write both Fubini rankings and permutations in one-line notation.
We now provide some initial technical results. 
The first result is analogous to \cite[Theorem 2.2]{harris2023outcome} and hence we omit its proof.
\begin{lemma}\label{lem:inverse_perm}
    Let $\pi\in \Sym_n$.
    Then $\alpha = \pi^{-1}$ is the unique permutation with outcome $\pi$.
\end{lemma}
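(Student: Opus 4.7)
The plan is to verify both existence (that $\pi^{-1}$ really does produce outcome $\pi$) and uniqueness (that no other permutation does so) by direct parking analysis, exploiting the fact that a permutation preference list has pairwise distinct entries. The lemma does not require the Fubini/unit-interval structure at all once one checks that every permutation already belongs to $\ufr{n}$, which is immediate since its weakly increasing rearrangement is $(1,2,\ldots,n)$ and its block structure consists entirely of singletons.

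First, I would take an arbitrary $\alpha = (a_1,\ldots,a_n) \in \Sym_n$ and observe that its entries are pairwise distinct, so when the $n$ cars park according to $\alpha$ no two cars ever compete for the same spot. Consequently every car $i$ parks exactly at its preferred spot $a_i$, with no displacement, so $\alpha \in \ufr{n}$ and the outcome map is defined on $\alpha$. By the definition of $\Out$, placing car $i$ at spot $a_i$ means $\pi_{a_i} = i$ for every $i \in [n]$, which is exactly the defining relation of $\alpha = \pi^{-1}$. Specializing to $\alpha = \pi^{-1}$ shows that $\Out(\pi^{-1}) = \pi$, giving existence.

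For uniqueness, I would take any $\beta \in \Sym_n$ with $\Out(\beta) = \pi$ and apply the same no-collision argument: car $i$ parks at its preferred spot $\beta_i$, so $\pi_{\beta_i} = i$ for all $i \in [n]$, which forces $\beta = \pi^{-1}$. The only step requiring any care is the no-collision observation, which relies crucially on $\alpha$ being a permutation rather than an arbitrary preference tuple; in general, cars may displace and the outcome need not be the inverse of the preference. Since the whole argument reduces to this observation plus the definition of inverse, I do not foresee any substantial obstacle, which is consistent with the authors remarking that the proof is analogous to \cite[Theorem 2.2]{harris2023outcome}.
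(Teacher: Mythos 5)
Your proof is correct and is essentially the standard argument the paper itself defers to (the paper omits the proof, citing its analogy to \cite[Theorem 2.2]{harris2023outcome}): distinct preferences force every car to park at its preferred spot, so $\pi_{a_i}=i$ and $\alpha=\pi^{-1}$, with uniqueness following by the same computation. The only point worth making explicit is that ``no two cars ever compete for the same spot'' hides a short induction on the arrival order (car $i$ finds spot $a_i$ free because the previously parked cars occupy exactly $\{a_1,\dots,a_{i-1}\}$, none of which equals $a_i$), but this is immediate.
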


For a fixed $\pi\in\Sym_n$, we are interested in determining the elements of $\Out^{-1}(\pi)$. 
Next, we provide the connection between the elements in $\Out^{-1}(\pi)$ and the ascent set $\Asc(\pi)$.

\begin{lemma}
    Let $\pi=\pi_1\pi_2\cdots\pi_n\in\Sym_n$.
    If $j\in\Asc(\pi)$, $\pi_{j+1}=i$, and $\alpha=(a_1,a_2,\ldots,a_n)\in\Out^{-1}(\pi)$, then $a_{i}\in\{j,j+1\}$.
\end{lemma}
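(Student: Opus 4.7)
The plan is to extract the bound directly from the definition of a unit interval parking function together with the standard parking function property that no car parks before its preferred spot. Because $\pi = \Out(\alpha)$ and $\pi_{j+1} = i$, by definition of the outcome map car $i$ is the unique car that ends up parked in spot $j+1$. Since $\alpha$ is a parking function, car $i$ only advances forward from its preference $a_i$ as it searches for an empty spot, so its final spot $j+1$ satisfies $j+1 \geq a_i$; equivalently, $a_i \leq j+1$. Since $\alpha \in \upf{n}$ is a unit interval parking function, car $i$ stops at most one spot past $a_i$, so $j+1 \leq a_i + 1$, which gives $a_i \geq j$. Combining the two inequalities yields $a_i \in \{j, j+1\}$, and because $j \in \Asc(\pi) \subseteq [n-1]$ we have $j \geq 1$, so both candidate values are legitimate preferences in $[n]$.

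I do not expect any serious obstacle: the entire content of the lemma lies in the unit interval property (supplying the lower bound $a_i \geq j$), while the upper bound $a_i \leq j+1$ is the immediate parking function definition. The hypothesis $j \in \Asc(\pi)$ is not used in establishing this particular containment; rather, it fixes the setting for subsequent results, where the ascent structure of $\pi$ governs the description of the fiber $\Out^{-1}(\pi)$ and both values $j$ and $j+1$ will be realized by some $\alpha$ in that fiber.
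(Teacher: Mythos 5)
Your proposal is correct and follows essentially the same route as the paper: both arguments reduce the claim to the unit interval parking property, which forces a car parked in spot $j+1$ to have preference $j+1$ (spot available) or $j$ (spot occupied, advance one). Your observation that the hypothesis $j\in\Asc(\pi)$ is not actually needed for this containment is accurate; the paper's proof mentions the ascent only to note that car $\pi_{j+1}$ arrived after car $\pi_j$, which likewise plays no essential role in the bound.
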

\begin{proof}
Assume $j\in\Asc(\pi)$, so $\pi_j<\pi_{j+1}$. 
This means that car $\pi_{j+1}=i$ arrived after car $\pi_j$ and is parked immediately to the right of $\pi_{j}$. Under unit interval parking rule, there are only two ways in which car $i$ can park in spot $j+1$, either spot $j+1$ was its preference and that spot was available, or its preference was the spot $j$, which it found occupied by car $\pi_j$. 
Thus $a_i\in \{j,j+1\}$ as desired.
\end{proof}

\begin{lemma}\label{lem:ascent in pi}
Let $\pi=\pi_1\pi_2\cdots\pi_n\in\Sym_n$.
    If $\beta\in\Out^{-1}(\pi)$ has a block consisting of a repeated value $ii$, then $\pi_i<\pi_{i+1}$ and the outcome permutation $\pi$ has an ascent at index $i$.
\end{lemma}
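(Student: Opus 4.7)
The plan is to unpack what a block $(i,i)$ in the block structure of $\beta$ tells us about the preferences of the cars and then trace through the parking process to compare the labels of the cars occupying spots $i$ and $i+1$.

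First, I would invoke the characterization of unit Fubini rankings via their block structure. By Theorem~\ref{thm: intersection}, every block has size at most two, and by Corollary~\ref{cor: block internals}, the block $(i,i)$ in the weakly increasing rearrangement of $\beta$ occupies positions $i$ and $i+1$ and is followed (if present) by a block starting at $i+2$. Translating back to $\beta$, this means exactly two entries of $\beta$ equal $i$, i.e., exactly two cars prefer spot $i$, and that the spots $i$ and $i+1$ are both filled by cars from this block (since the next block begins at spot $i+2$).

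Next, I would run the parking procedure on these two cars. Let $k < k'$ denote the labels of the two cars with preference $i$ (so that car $k$ arrives before car $k'$). When car $k$ arrives, spot $i$ is empty (no earlier car uses it, as the block starts at index $i$), so car $k$ parks in spot $i$. When car $k'$ arrives, spot $i$ is occupied by car $k$, so by the unit interval rule car $k'$ must park in spot $i+1$ (this spot is indeed available, since by Corollary~\ref{cor: block internals} no other car in $\beta$ parks there). Consequently $\pi_i = k$ and $\pi_{i+1} = k'$.

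Finally, since $k < k'$, we conclude $\pi_i < \pi_{i+1}$, so that $i \in \Asc(\pi)$, as required. There is no serious obstacle here; the only point requiring care is justifying that the two cars with preference $i$ actually land in spots $i$ and $i+1$ (rather than being pushed further right), which is exactly what Corollary~\ref{cor: block internals} together with the unit interval rule guarantees.
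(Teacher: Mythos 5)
Your proposal is correct and follows essentially the same route as the paper's proof: identify the two cars with preference $i$, note that the earlier-arriving (smaller-labeled) one takes spot $i$ and the later one is displaced to spot $i+1$, so $\pi_i<\pi_{i+1}$. The only difference is that you spell out, via Corollary~\ref{cor: block internals}, why spots $i$ and $i+1$ are reserved for exactly these two cars, a point the paper's proof leaves implicit.
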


\begin{proof}
    Suppose that in $\beta$, $b_x=i$ and $b_y=i$, where $x<y$. Car $x$ parks first, and parks in spot $i$. This means that $\pi_i=x$. Car $y$ attempts to park in spot $i$, but must park in spot $i+1$ instead. That is, $\pi_{i+1}=y$. Since $x<y$, there is an ascent in $\pi$ at $i$.
\end{proof}

\begin{proposition}\label{prop:fiberascents}
Let $\pi=\pi_1\pi_2\cdots\pi_n\in\Sym_n$ and $\alpha=\pi^{-1}\in\Out^{-1}(\pi)$. 
    Then \[\Out^{-1}(\pi)=\{\delta_I(\alpha): I\subseteq \Asc(\pi) \mbox{ with nonconsecutive entries}\}.\]
\end{proposition}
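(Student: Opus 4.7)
The plan is to prove both set inclusions. For the forward direction, I will show $\delta_I(\alpha) \in \Out^{-1}(\pi)$ whenever $I \subseteq \Asc(\pi)$ consists of nonconsecutive entries. First I will give an explicit description of $\delta_I(\alpha)$: because $\alpha = \pi^{-1}$ is a permutation, each $\delta_i$ with $i \in I$ triggers the nontrivial branch of Definition~\ref{def: delta} and replaces the unique entry $i+1$ (located at position $\pi_{i+1}$) with $i$. The nonconsecutive condition on $I$ ensures this remains true as the composition proceeds: after some $\delta_i$'s have been applied, for any $j \in I$ with $|j - i| \geq 2$ the values $j-1, j, j+1$ still each appear at most once. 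Hence $\delta_I(\alpha)$ is the tuple $\beta$ with $\beta_{\pi_{i+1}} = i$ for $i \in I$ and $\beta_k = \alpha_k$ otherwise.

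Next I will verify $\Out(\beta) = \pi$ by a spot-by-spot analysis. For each $i \in I$, both $\pi_i$ and $\pi_{i+1}$ prefer spot $i$ in $\beta$; since $i \in \Asc(\pi)$ gives $\pi_i < \pi_{i+1}$, car $\pi_i$ arrives first and parks at $i$, then $\pi_{i+1}$ bumps to $i+1$. For spots $s \notin I \cup (I+1)$, the car $\pi_s$ is the unique car preferring $s$ and its preference is unchanged from $\alpha$, so $\pi_s$ parks at $s$. The critical checks are that spot $i+1$ is empty when $\pi_{i+1}$ bumps into it, and that no unintended cars bump into a spot $s \notin I \cup (I+1)$. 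Both follow from the nonconsecutive condition: no car in $\beta$ prefers $i+1$ (this would require $i+1 \in I$ so some other car's preference got reduced to $i+1$, or $i \notin I$), and bumped cars land only in $I+1$.

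For the backward direction, given any $\beta \in \Out^{-1}(\pi)$, I will set $I = \{i \in [n-1] : \beta \text{ contains the block } (i,i)\}$. Lemma~\ref{lem:ascent in pi} yields $I \subseteq \Asc(\pi)$. Since $\beta \in \ufr{n}$, Theorem~\ref{thm: intersection} tells us all blocks have size at most $2$, and Corollary~\ref{cor: block internals} then forces any size-$2$ block at $i$ to be followed by a block starting at $i+2$, so $I$ has nonconsecutive entries. To conclude $\beta = \delta_I(\alpha)$, I will identify the entries of $\beta$: for $i \in I$, the two cars sharing preference $i$ must be exactly the cars that end up at spots $i$ and $i+1$, namely $\pi_i$ and $\pi_{i+1}$, so $\beta_{\pi_i} = \beta_{\pi_{i+1}} = i$. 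For any $k \notin \{\pi_{i+1} : i \in I\}$, car $k$ cannot have bumped, since a bump would place the shared rank $\pi^{-1}(k) - 1$ in $I$ and force $k = \pi_{i+1}$ for some $i \in I$; hence $\beta_k = \pi^{-1}(k) = \alpha_k$. This matches $\delta_I(\alpha)$ entry-by-entry.

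The main obstacle is the spot-by-spot verification in the forward direction, where the nonconsecutive condition on $I$ is doing the real work. Without it, adjacent indices $i, i+1 \in I$ would create overlapping doubled-up blocks and trigger cascading bumps that destroy the outcome --- precisely the phenomenon reflected by the failure of $\delta_i$ and $\delta_{i+1}$ to commute in Remark~\ref{rem:nocommute}. The careful bookkeeping of which preferences get changed and by how much is routine but must be done attentively to confirm that every unintended bump is excluded.
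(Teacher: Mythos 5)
Your proposal is correct and follows essentially the same route as the paper: both inclusions are proved using the block-structure results (Theorem~\ref{thm: intersection}, Corollary~\ref{cor: block internals}) and Lemma~\ref{lem:ascent in pi}, with the same explicit description of $\delta_I(\pi^{-1})$ and the same car-by-car parking verification that cars $\pi_i,\pi_{i+1}$ share preference $i$ exactly when $i\in I$. The only cosmetic difference is that the paper frames the reverse inclusion as an induction on the number of size-two blocks, whereas you identify the set $I$ and match entries directly; the substance is identical.
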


Before we prove Proposition \ref{prop:fiberascents}, we illustrate the effect of $\delta_I$ on a permutation $\pi$, when $I$ is a subset of nonconsecutive elements from $\Asc(\pi)$.
\begin{example}
    Fix $\pi = 412356$. Then  $\Asc(\pi) = \{2,3,4,5 \}$. Then $\alpha=\pi^{-1} = 234156$ is the unique permutation in $\Out^{-1}(\pi)$. 
    The only possible subsets of $\Asc(\pi)=\{2,3,4,5\}$ consisting of nonconsecutive integers are: $\emptyset$, $\{2\}$, $\{3\}$, $\{4\}$, $\{5\}$, $\{2,4\}$, $\{2,5\}$, and $\{3,5\}$ and 
    \begin{center}
    \begin{tabular}{ccc}
        $\delta_{\emptyset}(\alpha)=234156$,&
        $\delta_{\{2\}}(\alpha)=224156 $, &
        $\delta_{\{3\}}(\alpha)= 233156$, \\
        $\delta_{\{4\}}(\alpha)=234146$, &
        $\delta_{\{5\}}(\alpha)=234155$, &
        $\delta_{\{2,4\}}(\alpha)=224146$, \\
        $\delta_{\{2,5\}}(\alpha)=224155$, &
        $\delta_{\{3,5\}}(\alpha)=233155$. & 
        \end{tabular}
    \end{center}
        Straightforward computations establish that the results are unit Fubini rankings with outcome~$\pi$, we illustrate one such computation next.
        To check that any of the above expressions are  unit Fubini rankings, requires that we confirm they are a Fubini ranking, and that any rank appears at most twice. Consider $\delta_{3,5}(\alpha)=233155$, which encodes the information that competitor 4 ranks first, then competitor 1 ranks second, competitors 2 and 3 tie for rank three (disallowing rank four), and competitors 5 and 6 tie and rank fifth (disallowing rank six). Moreover, the ranks that appear, appear at most twice. This confirms that $\delta_{\{3,5\}}(\alpha)$ is a unit Fubini ranking. To conclude, we can park the cars according to $\delta_{\{3,5\}}(\alpha)$: car 1 parks in spot 2 (i.e. $\pi_2=1$), car 2 parks in spot 3 (i.e. $\pi_3=2$), car 3 parks in spot 4 (i.e. $\pi_4=3$), car 4 parks in spot 1 (i.e. $\pi_1=4$), car 5 parks in spot 5 (i.e. $\pi_5=5$), and car 6 parks in spot 6 (i.e. $\pi_6=6$). Thus, $\delta_{\{3,5\}}(\alpha)=233155$ has outcome $\pi_1\pi_2\pi_3\pi_4\pi_5\pi_6=412356=\pi$. Similar computations show all of the above listed expressions are in fact in $\Out^{-1}(\pi)$.

Given that the only subsets of $\Asc(\pi)$ consisting of nonconsecutive integers are those we have listed above, we have now confirmed that $\delta_I(\alpha)\in \Out^{-1}(\pi)$ for any such subset $I$. 
\end{example}

\begin{proof}[Proof of Proposition \ref{prop:fiberascents}]
It suffices to show
\begin{enumerate}
    \item $\Out^{-1}(\pi)\subseteq \{\delta_I(\pi^{-1}): I\subseteq \Asc(\pi) \mbox{ with nonconsecutive entries}\}$ and \label{setinclusion1}
    \item $ \{\delta_I(\pi^{-1}): I\subseteq \Asc(\pi) \mbox{ with nonconsecutive entries}\} \subseteq \Out^{-1}(\pi)$.\label{setinclusion2}
\end{enumerate}

\noindent For \eqref{setinclusion1}:  Fix $\beta \in \Out^{-1}(\pi)$ and consider the block structure of $\beta$. We will induct on the number of blocks of size two.
If the block structure of $\beta$ has no blocks of size 2, then $\beta$ is a permutation. By Lemma \ref{lem:inverse_perm}, we know that $\beta=\pi^{-1}$. If $I=\emptyset\subseteq\Asc(\pi)$,  then $\delta_{\emptyset}(\pi^{-1})=\pi^{-1}=\beta$. Thus $\beta\in\{\delta_I(\pi^{-1}): I\subseteq \Asc(\pi) \mbox{ with nonconsecutive entries}\}$, as desired. 
Now suppose that 
that the block structure of $\beta$ contains exactly one block of size two. 
Let the entries of that block be $ii$.
By Lemma \ref{lem:ascent in pi}, if $i$ appears twice in $\beta$, then $i\in\Asc(\pi)$. We must also have that $\delta_i(\pi^{-1}) = \beta$. 
Let $I=\{ i\}\subseteq\Asc(\pi)$ consisting of nonconsecutive integers. Then $\delta_{I}(\pi^{-1})=\beta$ since by definition $\delta_{\{i\}}$ when applied to $\pi^{-1}$ takes the unique value $i+1$ and replaces it with $i$. Thus $\delta_{\{i\}}(\pi^{-1})=\beta$ since we have exactly one block of size two with repeated entries $ii$.
Therefore, $\beta \in \{\delta_I(\pi^{-1}): I\subseteq \Asc(\pi) \mbox{ with nonconsecutive entries}\}$, as desired. 

Inductively, for any $\beta \in \Out^{-1}(\pi)$ with $k$ blocks of size 2, we can reconstruct the set $I$ by looking at the entries in those $k$ blocks. The indices in $I$ must all be more than one unit away, are determined by the minimum element in each block of size two, and must have all come from the ascent set of~$\pi$. Thus $\delta_I(\pi^{-1}) = \beta$, which means that $\beta \in \{\delta_I(\pi^{-1}): I\subseteq \Asc(\pi) \mbox{ with nonconsecutive entries}$ in $I=\{ i\}\}$.

\noindent For \eqref{setinclusion2}:
Let $I=\{i_1,i_2,\ldots,i_k\}\subseteq \Asc(\pi)$ consist of nonconsecutive integers. 
Without loss of generality assume $i_1<i_2<\cdots<i_k$. 
By Corollary \ref{lem:composition} we know $\delta_I(\pi^{-1})\in \ufr{n}$,
and the block structure of $\delta_I(\pi^{-1})$ is as follows:
\begin{itemize}[leftmargin=.2in]
    \item For each $i\in I$, there is a block of size two containing both instances of $i$ in $\delta_I(\pi^{-1})$, and 
    \item for each  $i\notin I$, there is a block of size one containing the only instance $i$ in $\delta_I(\pi^{-1})$.
\end{itemize}
Since the entries in $I$ are nonconsecutive, the block structure of $\delta_I(\pi^{-1})$ ensures that 
if $i\notin I$, car $\pi_i$ with preference $i$ parks in spot $i$, as needed to have outcome $\pi$. Moreover, if $i\in I$, then under $\delta_{I}(\pi^{-1})$, car $\pi_i$ has preference $i$ and parks in spot $i$, and car $\pi_{i+1}$ has preference $i$ and as $\pi_i<\pi_{i+1}$ it finds spot $i$ occupied and parks in spot $i+1$, as needed to have outcome $\pi$. Thus establishing that $\Out(\delta_I(\pi^{-1}))=\pi$, as desired.
\end{proof}

Tenner established that Boolean intervals in the weak order all have the form $[v,w]$ where $w= v \prod_{i\in I} s_i$ for some $I\subseteq \Asc(v) $ whose elements are nonconsecutive \cite[Corollary 4.4]{tenner2022interval}. 
We use this result in the proof of our main result, which we restate for convenience.

\nkThm*

\begin{proof}
Fix $\pi\in \Sym_{n}$.
Let $\mathcal{B}_n$ be the set of all Boolean intervals in $W(\Sym_n)$, and $\mathcal{B}_n(\pi)$ denote the set of all Boolean intervals in $W(\Sym_n)$ with minimal element $\pi$.
Define the map $\varphi_\pi: \Out^{-1}(\pi) \rightarrow \mathcal{B}_n(\pi)$ by
\[
    \varphi_\pi(\beta) = \left[\pi, \pi \prod_{i\in I} s_i\right]
\]
where $I \subseteq \Asc(\pi)$ of nonconsecutive integers is determined by $\beta=\delta_I(\pi^{-1})$. 
Namely, the set  $I$ consists of the repeated values in $\beta$, which is unique by Proposition \ref{prop:fiberascents}.
We begin by establishing that $\varphi_\pi$ is a bijection. 

The output $\varphi_\pi(\beta)$ is computed using the unique set $I$ associated with each $\beta$, and hence is unique. Furthermore, the output $[\pi, \pi \prod_{i\in I} s_i]\in \mathcal{B}_n$ is a Boolean interval \cite[Corollary 4.4]{tenner2022interval}.
Therefore $\varphi_\pi$ is well-defined.

For injectivity: If $\varphi_\pi(\beta)=\varphi_\pi(\gamma)=[\pi, \pi \prod_{i\in I} s_i]$ for some (nonconsecutive) $I\subseteq \Asc(\pi)$, then  $\delta_{I}(\pi^{-1})=\beta$
and $\delta_{I}(\pi^{-1})=\gamma$. Therefore, $\beta=\gamma$.

For surjectivity: Every Boolean interval in $\mathcal{B}_n(\pi)$ has the form $[\pi, \pi \prod_{i\in I} s_i]$ where $I\subseteq \Asc(\pi)$ consists of nonconsecutive integers \cite[Corollary 4.4]{tenner2022interval}. Then, by Proposition~\ref{prop:fiberascents}, we know that $\delta_{I}(\pi^{-1})\in\Out^{-1}(\pi)$. Then $\varphi_\pi(\delta_I(\pi^{-1}))=[\pi, \pi \prod_{i\in I} s_i]$.

Together, this establishes that the map $\varphi_\pi$ is a bijection. 

Now define 
$\phi:\ufr{n}\to\mathcal{B}_n$ by
$\phi(\alpha)\coloneqq\varphi_\pi(\alpha)$
where $\Out(\alpha)=\pi$.
Since $\varphi_\pi$ is a bijection for all $\pi$ and since $\Out$ is well-defined (Remark \ref{rem:OutWellDef}), then $\phi$ is a bijection.

To conclude, we establish that $\varphi_\pi$ preserves the statistic of $n-k$ distinct ranks in $\Out^{-1}(\pi)$ and rank $k$ in the Boolean interval. 
Let $\beta \in \ufr{n}$ such that $\Out(\beta)=\pi$ where ties occur at ranks denoted by $r_1,r_2,\ldots,r_k$. Then $\beta$ has $n-k$ distinct ranks.
Then, by Proposition \ref{prop:fiberascents}, the set $I=\{r_1,r_2,\ldots ,r_{k}\}$ is a subset of $\Asc(\pi)$ consisting of $k$ nonconsecutive integers, and $\delta_I(\pi^{-1})=\beta$. 
Then $\varphi_\pi(\beta)$ corresponds uniquely to the rank $k$ Boolean interval given by $[\pi, \pi \prod_{i\in I} s_i]$.
\end{proof}

\section{Enumerations}
\label{sec: enumerations}

In this section, we provide enumerative formulas for:
\begin{enumerate}
    \item $f(n)$, the total Boolean intervals in $W(\Sym_n)$, \label{enum1}
    \item $f(n,k)$, the total number  of rank $k$ Boolean intervals in $W(\Sym_n)$,\label{enum2} and 
    \item the number of Boolean intervals in $W(\Sym_n)$ with minimal element $\pi$ \label{enum3}.
\end{enumerate}

To establish \eqref{enum1}, we begin with an immediate consequence of Theorem~\ref{thm:nkThm}.

\begin{corollary}\label{cor:ofmain}
    The total number of Boolean intervals in $W(\Sym_n)$ is equal to the number of unit Fubini rankings of length $n$.
\end{corollary}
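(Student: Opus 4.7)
The plan is to observe that this is a direct consequence of Theorem~\ref{thm:nkThm}, which was proved by constructing an explicit bijection between unit Fubini rankings and Boolean intervals. Specifically, inside the proof of Theorem~\ref{thm:nkThm} the authors define the global map $\phi:\ufr{n}\to\mathcal{B}_n$ by $\phi(\alpha):=\varphi_\pi(\alpha)$ where $\pi=\Out(\alpha)$, and they verify that $\phi$ is a bijection. Thus the corollary follows by taking cardinalities of the two sides of this bijection.

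Alternatively, and more in the spirit of presenting the corollary as a consequence of the statement rather than the proof of Theorem~\ref{thm:nkThm}, I would argue as follows. The rank of a Boolean interval $[\pi,w]$ in $W(\Sym_n)$ can be at most $\lfloor n/2\rfloor$, since by Tenner's characterization $\pi^{-1}w$ is a product of pairwise commuting simple transpositions, which requires the indices to be pairwise nonconsecutive. Hence every Boolean interval in $W(\Sym_n)$ has some rank $k\in\{0,1,\ldots,\lfloor n/2\rfloor\}$, and these rank strata form a disjoint union $\mathcal{B}_n=\bigsqcup_{k\geq 0}\mathcal{B}_n^{(k)}$. Similarly, each $\alpha\in\ufr{n}$ has some number of distinct ranks in $\{1,2,\ldots,n\}$, and by Theorem~\ref{thm: intersection} the number of distinct ranks lies in $\{\lceil n/2\rceil,\ldots,n\}$, so $\ufr{n}=\bigsqcup_{k\geq 0}\ufr{n}^{(n-k)}$ where $\ufr{n}^{(n-k)}$ denotes those with exactly $n-k$ distinct ranks.

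Then Theorem~\ref{thm:nkThm} supplies a bijection $\ufr{n}^{(n-k)}\leftrightarrow \mathcal{B}_n^{(k)}$ for each $k$, and summing yields
\begin{equation*}
|\ufr{n}|=\sum_{k\geq 0}|\ufr{n}^{(n-k)}|=\sum_{k\geq 0}|\mathcal{B}_n^{(k)}|=|\mathcal{B}_n|,
\end{equation*}
which is the desired equality. There is no real obstacle here; the entire content has been packaged into Theorem~\ref{thm:nkThm}, so the proof amounts to little more than the observation that a rank-stratified bijection between finite sets induces an unstratified bijection on cardinalities.
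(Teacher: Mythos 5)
Your proposal is correct and matches the paper, which presents this corollary as an immediate consequence of Theorem~\ref{thm:nkThm}: the global bijection $\phi:\ufr{n}\to\mathcal{B}_n$ constructed in that theorem's proof (equivalently, summing the rank-stratified bijections over $k$) gives the equality of cardinalities directly. Your additional observations bounding the possible ranks are harmless but not needed, since the strata trivially partition both sides regardless of which values of $k$ actually occur.
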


By setting $q=1$ into the exponential generating function \cite[Exercise~3.185(h)]{stanley2012enumerative}
\begin{equation}
    F(x,q) 
    = \sum_{n\geq 0} \sum_{k\geq 0} f(n,k)q^k\frac{x^n}{n!} 
    = \frac{1}{1-x-\frac{q}{2}x^2}
    \label{eq:gen_fun},
\end{equation}
Stanley~\cite{stanley2016mathoverflow} points out that the total number of Boolean intervals in $W(\Sym_n)$ (OEIS \href{https://oeis.org/A080599}{A080599}) satisfies the recurrence relation  
\begin{equation}
    \label{eq: rec}
    f(n+1) = (n+1)f(n)+\binom{n+1}{2}f(n-1),
\end{equation}
where $f(0)=1$ and $f(1)=1$. 
In light of Corollary \ref{cor:ofmain}, we give a combinatorial proof of this result from the perspective of unit Fubini rankings.

\begin{restatable}{rethm}{recThm}
\label{thm: rec}
Let $g(n+1)$ denote the  number of unit Fubini rankings of length $n+1$. Then $g(n+1)$ satisfies the recursion
\[g(n+1)=(n+1)g(n)+\binom{n+1}{2}g(n-1),\]
where $g(1)=1$ and $g(2)=3$.
\end{restatable}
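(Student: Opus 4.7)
My plan is to establish the recurrence combinatorially by partitioning $\ufr{n+1}$ according to the last block in the block structure of each element. By Theorem~\ref{thm: intersection}, every $\alpha \in \ufr{n+1}$ has a block structure $b_1 \mid b_2 \mid \cdots \mid b_m$ in which every block has size one or two. Applying Corollary~\ref{cor: block internals} to the last block, either $|b_m|=1$, in which case $b_m = (n+1)$ and the value $n+1$ appears exactly once in $\alpha$, or $|b_m|=2$, in which case $b_m = (n,n)$ and the value $n$ appears exactly twice in $\alpha$ while $n+1$ does not appear at all. These two cases are disjoint and exhaustive, so I would count each separately.

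For the first case, I would define a bijection with $[n+1] \times \ufr{n}$ by sending $\alpha$ to the pair $(i, \beta)$, where $i$ is the unique position of the value $n+1$ in $\alpha$ and $\beta$ is obtained by deleting that entry. The reduced tuple $\beta$ has length $n$ and block structure $b_1 \mid \cdots \mid b_{m-1}$, whose sizes sum to $n$ and remain bounded by two, so $\beta \in \ufr{n}$ by Theorem~\ref{thm: intersection}. The inverse map inserts the value $n+1$ into $\beta$ at position $i$; the resulting block structure appends a singleton block $(n+1)$, which is valid by Corollary~\ref{cor: block internals}. This case therefore contributes $(n+1)\, g(n)$ elements.

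For the second case, I would define a bijection with $\binom{[n+1]}{2} \times \ufr{n-1}$ by sending $\alpha$ to the pair $(\{i,j\}, \beta)$, where $i < j$ are the two positions of the value $n$ in $\alpha$ and $\beta$ is obtained by deleting both entries. The same block-structure argument shows $\beta \in \ufr{n-1}$, while the inverse map inserts the value $n$ at both positions in $\{i,j\}$, appending the block $(n,n)$. Using an unordered pair is essential here because the two inserted copies of $n$ are indistinguishable. This case therefore contributes $\binom{n+1}{2}\, g(n-1)$ elements. Summing the two contributions yields the recurrence, and the base cases $g(1)=|\{(1)\}|=1$ and $g(2)=|\{(1,1),(1,2),(2,1)\}|=3$ are immediate.

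The main care needed is in verifying that the insertion and deletion operations really do preserve membership in the relevant $\ufr{}$ sets, i.e., that the modified block structure still satisfies the size-at-most-two condition of Theorem~\ref{thm: intersection} and the consecutive-block spacing relation of Corollary~\ref{cor: block internals}. This is not conceptually difficult, since both operations only add or remove a final block of the correct shape, but it is the step I would want to write down most carefully to make the bijections airtight.
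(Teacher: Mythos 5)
Your proposal is correct and follows essentially the same route as the paper: both partition $\ufr{n+1}$ according to whether the final block of the block structure is the singleton $(n+1)$ or the pair $(n,n)$, and count each case by choosing the positions of the deleted/inserted entries times the number of shorter unit Fubini rankings. If anything, your version states the deletion/insertion bijections a bit more explicitly than the paper does, but the decomposition and the counts $(n+1)g(n)$ and $\binom{n+1}{2}g(n-1)$ are identical.
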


\begin{proof}
    Let $\alpha$ be unit Fubini ranking of length $n$. 
    The block structure of an element in $\ufr{n}$ means we have two options for the final block: it either ends in an $(n-1)(n-1)$ or an $n$. We have total freedom in the remaining positions. Thus there are two mutually exclusive cases to  consider.
    \begin{itemize}
        \item The last block has the form  $(n-1)(n-1)$: Then we may select one of the $g(n-1)$ unit Fubini rankings in $\ufr{n-1}$. Place the elements in the unit Fubini rankings in any of the $n+1$ possible spots for the unit Fubini ranking of length $n+1$. For each unit Fubini ranking in $\ufr{n-1}$ there are 
        \[
        \binom{n+1}{n-1} = \binom{n+1}{2}
        \]
        ways to do this.
        \item The last block has the form $n$: Then we may select one of the $g(n)$ unit Fubini rankings in $\ufr{n}$. Place the elements in the unit Fubini ranking in any of the $n+1$ possible spots for the unit Fubini ranking of length $n+1$. For each unit Fubini ranking in $\ufr{n}$ there are \[\binom{n+1}{n} = n+1\] ways to do this.
    \end{itemize}
    The recursion follows from taking the sum of the counts in each case. The initial values arise from the fact that $|\ufr{1}|=|\{(1)\}|$, hence $g(1)=1$, and $|\ufr{2}|=|\{(1,1),(1,2),(2,1)\}|$, hence $g(2)=3$.    
\end{proof}

For \eqref{enum2}, we begin with the following combinatorial proof.

\newFormulaThm*

\begin{proof}
Let $g(n,k)$ denote the number of unit Fubini rankings of length $n$ that have $n-k$ distinct ranks. 
Theorem \ref{thm:nkThm} implies that $g(n,k)=f(n,k)$, hence it suffices to show that $g(n,k)=\frac{n!}{2^k}\binom{n-k}{k}$.

If $\alpha\in\ufr{n}$ has $n-k$ distinct ranks, then its block structure has the form
\[b_1\,|\,b_2\,|\,\cdots\,|\,b_{n-k},\]
where exactly $k$ of the blocks have size two and all remaining blocks have size one. 
To enumerate all such $\alpha$, first select the indices of the blocks with size two. 
We can do this in $\binom{n-k}{k}$ ways. 
Then select the indices at which to place the repeated values within the blocks of size two. We do so iteratively, by first selecting two indices among $n$ where we will place the smallest repeated values of $\alpha$. 
This can be done in $\binom{n}{2}$ ways.
Then we repeat this process by selecting two indices among the remaining $n-2$ indices in which we place the next smallest repeated values of $\alpha$. 
This can be done in $\binom{n-2}{2}$ ways.
Through this process, the total ways in which we can place all repeated values in $\alpha$ is given by the product 
\[\binom{n}{2}\binom{n-2}{2}\cdots \binom{n-2(k-1)}{2}=\prod_{i=0}^{k-1}\binom{n-2i}{2}.\]
Finally, the values in the blocks of size one can appear in any order within the remaining available indices. 
We can place them in $(n-2k)!$ ways. Thus
\[
g(n,k)= \binom{n-k}{k}  (n-2k)! \prod_{i=0}^{k-1}\binom{n-2i}{2},
\]
    which simplifies to our desired result.
\end{proof}

\begin{remark}
In the introduction we referenced OEIS \href{https://oeis.org/A001286}{A001286}, a sequence known as the Lah numbers, which gives the values $f(n,1)=\frac{(n-1)n!}{2}$ for the number of $B_1$ in $W(\Sym_n)$. Theorem~\ref{newFormulaThm} implies that the Lah numbers also enumerate unit Fubini rankings with $n-1$ distinct ranks. 
Aguillon et al.~\cite{aguillon2023parking} showed that the number of unit interval parking functions in which exactly $n-1$ cars park in their preference is also enumerated by the Lah numbers. 
This result was established via a bijection between those parking functions and ideal states in the game the Tower of Hanoi, which were enumerated by the Lah numbers.
\end{remark}

We now prove that $g(n,k)$ has the same generating function as ~\eqref{eq:gen_fun}.
\begin{restatable}{rethm}{unnamedThm2}
    The exponential generating function for $g(n,k)$ has the closed form
    \[
    G(x,q) = \sum_{n\geq 0} \sum_{k\geq 0} g(n,k)q^k\frac{x^n}{n!} = \frac{1}{1-x-\frac{q}{2}x^2}.
    \]
\end{restatable}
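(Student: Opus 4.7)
The plan is to verify the claimed identity by expanding the right-hand side as a formal power series in $x$ and $q$ and matching coefficients with the closed formula $g(n,k) = \frac{n!}{2^k}\binom{n-k}{k}$ established in Theorem~\ref{newFormulaThm}.

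First, I would apply the geometric series expansion to obtain
\[
\frac{1}{1-x-\frac{q}{2}x^2} = \sum_{m \geq 0}\left(x + \frac{q}{2}x^2\right)^m,
\]
and then apply the binomial theorem to each summand to get
\[
\left(x + \frac{q}{2}x^2\right)^m = \sum_{k=0}^m \binom{m}{k}\frac{q^k}{2^k}\,x^{m+k}.
\]
Reindexing via $n = m+k$ (so that $m = n-k$, and the constraint $0 \leq k \leq m$ becomes $2k \leq n$) and collecting by powers of $x$ then yields
\[
\frac{1}{1-x-\frac{q}{2}x^2} = \sum_{n \geq 0}\sum_{k \geq 0} \binom{n-k}{k}\frac{q^k}{2^k}\,x^n,
\]
under the standard convention that $\binom{n-k}{k} = 0$ whenever $k > n-k$.

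To conclude, Theorem~\ref{newFormulaThm} gives $\binom{n-k}{k}\frac{1}{2^k} = \frac{g(n,k)}{n!}$ (since $g(n,k) = f(n,k)$ by Theorem~\ref{thm:nkThm}), so the right-hand side of the last display equals $\sum_{n,k \geq 0} g(n,k)\,q^k \frac{x^n}{n!} = G(x,q)$, which is the desired identity. The argument is essentially a routine manipulation of formal power series; the only step requiring mild care is the interchange of summation order, which is harmless because for each fixed $n$ only finitely many $k$ (those with $2k \leq n$) contribute a nonzero term. Accordingly, there is no substantive obstacle to overcome, and the main content of the theorem is simply recognizing that the closed-form enumeration from Theorem~\ref{newFormulaThm} is precisely the coefficient extraction of the rational function $(1-x-\frac{q}{2}x^2)^{-1}$.
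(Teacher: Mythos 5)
Your proof is correct, but it takes a genuinely different route from the paper. You expand the rational function directly: write $\bigl(1-x-\tfrac{q}{2}x^2\bigr)^{-1}$ as the geometric series $\sum_{m\ge 0}\bigl(x+\tfrac{q}{2}x^2\bigr)^m$, apply the binomial theorem, re-index by $n=m+k$, and observe that the resulting coefficient $\tfrac{1}{2^k}\binom{n-k}{k}$ is exactly $g(n,k)/n!$ from Theorem~\ref{newFormulaThm}. The paper instead works from the sum side: starting from $G(x,q)=\sum_{n,k}\tfrac{1}{2^k}\binom{n-k}{k}q^kx^n$, it applies Pascal's identity $\binom{n-k}{k}=\binom{n-k-1}{k}+\binom{n-k-1}{k-1}$, re-indexes each piece to recover a copy of $G$ multiplied by $x$ and by $\tfrac{q}{2}x^2$ respectively, arrives at the functional equation $G=1+xG+\tfrac{q}{2}x^2G$, and solves. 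Both arguments rest on the same input (the closed form for $g(n,k)$), and both are complete. Yours is shorter and more direct as a verification; the paper's derivation has the advantage of exhibiting the functional equation explicitly, which mirrors the combinatorial recursion of Theorem~\ref{thm: rec} (append a singleton block or a doubleton block) and so makes the structural reason for the denominator $1-x-\tfrac{q}{2}x^2$ visible rather than merely confirming it. Your handling of the convergence and re-indexing issues (finitely many nonzero $k$ for each $n$, the convention $\binom{n-k}{k}=0$ for $2k>n$) is appropriately careful and there is no gap.
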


\begin{proof}
    From Theorem~\ref{newFormulaThm}, we know that $g(n,k)= \frac{n!}{2^k}\binom{n-k}{k}$. Then
    \begin{align}\label{eq:gf1}
        G(x,q) = \sum_{n\geq 0} \sum_{k\geq 0} g(n,k)q^k\frac{x^n}{n!} =  \sum_{n\geq 0} \sum_{k\geq 0} \frac{1}{2^k}\binom{n-k}{k}q^kx^n.
    \end{align}
    Setting $n=0$ in Equation \eqref{eq:gf1} yields
    \begin{align} \label{eq:niszero}
         \sum_{k\geq 0} \frac{1}{2^k}\binom{-k}{k}q^kx^0= 1+ \sum_{k\geq 1} \frac{1}{2^k}\binom{-k}{k}q^k = 1 + 0.
    \end{align}
Substituting \eqref{eq:niszero} into \eqref{eq:gf1} gives
\begin{align}\label{eq:gen}
   G(x,q) &=  1+\sum_{n\geq 1} \sum_{k\geq 1} \frac{1}{2^k}\binom{n-k}{k}q^kx^n.
    \end{align}

Using the binomial identity $\binom{n}{k}=\binom{n-1}{k}+\binom{n-1}{k-1}$, \eqref{eq:gen} becomes

\begin{align}\label{eq:binomId}
        G(x,q) =  1+\sum_{n\geq 1} \sum_{k\geq 1} \frac{1}{2^k}\left ( \binom{n-k -1}{k} + \binom{n-k-1}{k-1}\right )q^kx^n,
    \end{align}
    which can be rewritten as
    \begin{align}\label{eq:foil}
            G(x,q) =  1+\sum_{n\geq 1} \sum_{k\geq 1} \frac{1}{2^k} \binom{n-k -1}{k}q^kx^n + \sum_{n\geq 1} \sum_{k\geq 1} \frac{1}{2^k} \binom{n-k-1}{k-1} q^kx^n.
    \end{align}
    The first set of summands in \eqref{eq:foil} simplifies in the following way:
    \begin{align} 
    \label{eq:reindex}
        \sum_{n\geq 1} \sum_{k\geq 1} \frac{1}{2^k} \binom{n-k -1}{k}q^kx^n 
        &= x\sum_{n\geq 1} \sum_{k\geq 1} \frac{1}{2^k} \binom{n- 1 - k}{k}q^kx^{n-1} \nonumber \\
        &= x\sum_{n\geq 0} \sum_{k\geq 0} \frac{1}{2^k} \binom{n -k}{k}q^kx^{n},
    \end{align}
    where the last equality in \eqref{eq:reindex} follows from re-indexing with respect to $n$, and the fact that $\binom{n}{k}=0$ whenever $k>n$.
The second set of summands in \eqref{eq:foil} simplifies in the following way:

    \begin{align} 
        \sum_{n\geq 1} \sum_{k\geq 1} \frac{1}{2^k} \binom{n-k-1}{k-1} q^kx^n 
        &= \frac{q}{2}x^{2} \sum_{n\geq 1} \sum_{k\geq 1} \frac{1}{2^{k-1}} \binom{n-2- k + 1)}{k-1} q^{k-1}x^{n-2} \nonumber\\
        &= \frac{q}{2}x^{2} \sum_{n\geq 0} \sum_{k\geq 0} \frac{1}{2^{k}} \binom{n-k}{k} q^{k}x^{n},\label{eq:reindex2}
    \end{align}
    where the last equality in \eqref{eq:reindex2} follows from re-indexing with respect to $n$ and $k$.

    Substituting \eqref{eq:reindex} and \eqref{eq:reindex2} into \eqref{eq:foil} allows us to reassemble everything to arrive at
    \[
    G(x,q) = 1+xG(x,q) + \frac{q}{2}x^{2}G(x,q),
    \]
    from which we arrive at
    \[
    G(x,q) = \frac{1}{1-x-\frac{q}{2}x^2}.\qedhere
    \]
\end{proof}

We now present our final enumerative result settling  \eqref{enum3}, which further connects this work to Fibonacci numbers.

\sizeOfFiberThm*

\begin{proof}
It is straightforward to prove that the number of ways to select nonconsecutive entries from the set $[n]$ is given by $F_{n+2}$. 
Thus, for each $i\in[k]$, the number of ways to select nonconsecutive elements from $b_i$ is given by $F_{|b_i|+2}$. As the blocks $b_1,b_2,\ldots,b_k$ are pairwise disjoint, the total number of ways to select subsets from $\cup_{i=1}^k b_i$ consisting of nonconsecutive integers is given by $\prod_{i=1}^k F_{|b_i|+2}$, as desired.
\end{proof}

Among the many results established by Tenner concerning Boolean intervals in both the Bruhat order and in the weak order \cite{tenner2022interval}, we highlight the following.

\begin{proposition}{\cite[Proposition 5.9]{tenner2022interval}}
     Let $i\in [n-1]$ be fixed. The number of Boolean intervals in $W(\Sym_n)$ of the form $[s_i, w]$ is $F_{i+1}F_{n-i+1}$, where $F_i$ is the $i$th Fibonacci number.
    \end{proposition}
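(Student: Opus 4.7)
The plan is to derive this statement as an immediate corollary of Theorem~\ref{thm: size of fiber}, which already enumerates Boolean intervals with a fixed minimal element $\pi$ in terms of the block structure of $\Asc(\pi)$. Specializing to $\pi = s_i$ reduces the entire problem to computing the maximal blocks of consecutive integers in $\Asc(s_i)$ and then reading off the corresponding product of Fibonacci numbers.

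First I would write $s_i$ in one-line notation, namely
\begin{align*}
s_i \;=\; 1\,2\,\cdots\,(i-1)\,(i+1)\,i\,(i+2)\,\cdots\,n,
\end{align*}
which differs from the identity only by swapping the entries in positions $i$ and $i+1$. Comparing adjacent entries shows that the unique descent of $s_i$ occurs at position $i$, so
\begin{align*}
\Asc(s_i) \;=\; \{1, 2, \ldots, i-1\} \,\sqcup\, \{i+1, i+2, \ldots, n-1\}.
\end{align*}
This decomposes into (at most) two maximal blocks of consecutive entries: $b_1 = \{1, \ldots, i-1\}$ of size $i-1$ and $b_2 = \{i+1, \ldots, n-1\}$ of size $n-1-i$. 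Applying Theorem~\ref{thm: size of fiber} to $\pi = s_i$ then yields
\begin{align*}
F_{|b_1|+2}\, F_{|b_2|+2} \;=\; F_{(i-1)+2}\, F_{(n-1-i)+2} \;=\; F_{i+1}\, F_{n-i+1},
\end{align*}
as claimed.

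The only real nuisance, rather than a genuine obstacle, is bookkeeping for the boundary cases $i=1$, $i=n-1$, and $n=2$, in which one or both of $b_1, b_2$ is empty and is therefore not itself a block in the partition of $\Asc(s_i)$. In each such case, using $F_2 = 1$, one verifies directly that the formula $F_{i+1}\,F_{n-i+1}$ still matches the value produced by Theorem~\ref{thm: size of fiber} using only the nonempty blocks (or, when $n=2$, the empty product). Since the computation of $\Asc(s_i)$ is elementary and the rest is an application of an already-proved theorem, the main work is really that of Theorem~\ref{thm: size of fiber} itself; the present statement is essentially a corollary.
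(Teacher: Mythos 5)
Your proposal is correct and follows exactly the paper's own derivation: the paper likewise recovers Tenner's Proposition~5.9 by computing $\Asc(s_i)=[n-1]\setminus\{i\}$, splitting it into the blocks $b_1=[i-1]$ and $b_2=\{i+1,\ldots,n-1\}$, and applying Theorem~\ref{thm: size of fiber} to obtain $F_{|b_1|+2}F_{|b_2|+2}=F_{i+1}F_{n-i+1}$. Your explicit attention to the boundary cases where a block is empty is a small but welcome refinement over the paper's one-line treatment.
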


For any $i\in[n-1]$, we have that $\Asc(s_i)=[n]\setminus\{i\}$. 
Then $b_1=[i-1]$ and $b_2=\{i+1,i+2,\ldots, n-1\}$, and Theorem \ref{thm: size of fiber} implies that the number of Boolean intervals in $W(\Sym_n)$ with minimal element $s_i$ is given by $F_{|b_1|+2}=F_{|b_2|+2}=F_{i+1}F_{n-i+1}$, recovering~\cite[Proposition 5.9]{tenner2022interval}.

We remark that in  the statement of Theorem \ref{thm: size of fiber}, we allow $[\pi,\pi]$ to be a Boolean interval. If we impose the condition that the maximal element $w$ cannot be equal to the minimal element $\pi$, then we have the following. 

\begin{corollary}
    Let $\pi = \pi_1 \pi_2 \cdots \pi_n \in \Sym_{n}$ be in one-line notation and partition its ascent set $\Asc(\pi)=\{i \in [n-1]: \pi_i<\pi_{i+1}\}$ into maximal blocks $b_1, b_2, \ldots, b_k$ of consecutive entries. 
Then, the number of Boolean intervals $[\pi,w]$ in $W(\Sym_n)$ with $w\neq \pi$ is given by
\begin{align*}
    \left(\prod_{i=1}^kF_{|b_i|+2}\right) -1,
\end{align*}
where $F_{\ell}$ is the $\ell$th Fibonacci number and $F_1=F_2 = 1$.

\end{corollary}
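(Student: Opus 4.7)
The plan is to reduce the counting problem to a pure combinatorial question about subsets of $\Asc(\pi)$, via Tenner's characterization \cite[Corollary 4.4]{tenner2022interval}: Boolean intervals in $W(\Sym_n)$ with minimal element $\pi$ are exactly the intervals of the form $[\pi, \pi \prod_{i \in I} s_i]$ for some subset $I \subseteq \Asc(\pi)$ consisting of pairwise nonconsecutive integers (with $I=\emptyset$ giving the trivial interval $[\pi,\pi]$). Distinct such $I$ give distinct maximal elements $w$, so the count I need is simply the number of subsets of $\Asc(\pi)$ containing no two consecutive integers.

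Next, I would exploit the partition of $\Asc(\pi)$ into maximal blocks $b_1, b_2, \ldots, b_k$ of consecutive integers to show that the nonconsecutive constraint factors across blocks. By maximality, for each $j < k$ the largest integer of $b_j$ and the smallest integer of $b_{j+1}$ differ by at least two, so any pair of integers chosen from distinct blocks is automatically nonconsecutive. Therefore a valid $I$ amounts to an independent choice, in each block $b_i$, of a subset of $b_i$ with no two consecutive integers, and the total count is the product of these per-block counts.

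For the per-block count, I would invoke the classical Fibonacci identity: the number of subsets of a set of $m$ consecutive integers with no two consecutive values is $F_{m+2}$. This can be proved by the standard one-line recurrence argument (condition on whether the largest element is chosen), with base cases $F_2 = 1$ for $m=0$ and $F_3 = 2$ for $m=1$. Applying this with $m = |b_i|$ and multiplying over $i \in [k]$ yields
\[
\prod_{i=1}^{k} F_{|b_i|+2},
\]
as claimed.

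There is no genuinely hard step here once Tenner's characterization is in hand; the only point that warrants care is confirming that the nonconsecutive constraint truly decouples across the maximal blocks, which follows immediately from maximality. The argument does not even need to track the one-line notation of $w$ or engage with the $\delta_I$ machinery of Section~\ref{sec: bijection}, since the characterization from \cite{tenner2022interval} already packages the bijection between Boolean intervals above $\pi$ and nonconsecutive subsets of $\Asc(\pi)$.
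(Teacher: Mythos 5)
Your argument has an off-by-one gap: as written, it counts \emph{all} Boolean intervals with minimal element $\pi$, including the rank-zero interval $[\pi,\pi]$, and arrives at $\prod_{i=1}^{k} F_{|b_i|+2}$. That is the statement of Theorem~\ref{thm: size of fiber}, not the corollary at hand. The corollary requires $w \neq \pi$, which under Tenner's correspondence means $I \neq \emptyset$. You explicitly note that $I=\emptyset$ yields the trivial interval $[\pi,\pi]$, but you then declare that "the count I need is simply the number of subsets of $\Asc(\pi)$ containing no two consecutive integers" --- a count that includes the empty set (indeed, the $F_{m+2}$ formula you invoke counts the empty subset, e.g.\ $F_2=1$ for $m=0$) --- and your final line asserts the answer is $\prod_{i=1}^{k} F_{|b_i|+2}$ "as claimed," whereas the claim is that quantity minus one. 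The repair is a single sentence: the empty set is the unique nonconsecutive subset of $\Asc(\pi)$ whose associated interval has $w=\pi$, so it must be excluded, giving $\bigl(\prod_{i=1}^{k} F_{|b_i|+2}\bigr)-1$.

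Aside from that missing subtraction, your route is the same as the paper's: the paper proves Theorem~\ref{thm: size of fiber} by precisely your argument (Tenner's characterization of Boolean intervals above $\pi$ as nonconsecutive subsets of $\Asc(\pi)$, the decoupling of the nonconsecutive condition across the maximal blocks $b_1,\ldots,b_k$, and the $F_{m+2}$ count of nonconsecutive subsets of $m$ consecutive integers), and then obtains the corollary by observing that the condition $w\neq\pi$ forbids the empty subset.
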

\begin{proof}
    The result follows from Theorem \ref{thm: size of fiber}, and noting that in creating a subset of $\Asc(\pi)$ consisting of nonconsecutive elements we cannot utilize the empty set.
\end{proof}

\section{Future work}\label{sec:future}

To the best of our knowledge the formula of Theorem \ref{newFormulaThm}, which counts the Boolean intervals of rank $k$ in $W(\Sym_n)$ did not exist previously in the literature. 
We gave a combinatorial proof of this result via the enumeration of unit Fubini rankings with $n-k$ distinct ranks. 
We wonder whether this new proof and combinatorial object might shed light on how a symmetric group proof may be constructed.

Tenner has provided many results for intervals in the weak (Bruhat) order \cite{tenner2022interval}. 
The paper also provides results on the Bruhat order, which leads us to wonder if there are other connections from Fubini rankings that can be used to count intervals in the Bruhat order.
We also wonder if it may be possible to utilize unit Fubini rankings, or a slight generalization thereof, to enumerate Boolean intervals in Bruhat and weak orders of other Coxeter systems. To this end we state the following: 
How many Boolean intervals are there in the weak order of the hyperoctahedral group?

\section*{Acknowledgements}
The authors thank Bridget E.~Tenner for helpful discussions and references. 
The authors thank Richard P.~Stanley for suggestions on a draft of this manuscript.
J.~Elder was partially supported through an AWM Mentoring Travel Grant. 
P.~E.~Harris was supported through a Karen Uhlenbeck EDGE Fellowship.
J.~C.~Mart\'inez Mori was partially supported by NSF Grant No. 2144127, awarded to S. Samaranayake. 
J.~C.~Mart\'inez Mori is supported by Schmidt Science Fellows, in partnership with the Rhodes Trust.

\bibliographystyle{plain}
\bibliography{bib_v2}

\begin{thebibliography}{10}

\bibitem{aguillon2023parking}
Yasmin Aguillon, Dylan Alvarenga, Pamela~E. Harris, Surya Kotapati, J.~Carlos Mart\'{\i}nez~Mori, Casandra~D. Monroe, Zia Saylor, Camelle Tieu, and Dwight~Anderson Williams, II.
\newblock On parking functions and the tower of {H}anoi.
\newblock {\em Amer. Math. Monthly}, 130(7):618--624, 2023.

\bibitem{bjorner2009shape}
Anders Bj\"{o}rner and Torsten Ekedahl.
\newblock On the shape of {B}ruhat intervals.
\newblock {\em Ann. of Math. (2)}, 170(2):799--817, 2009.

\bibitem{bradt2024unit}
S.~Alex Bradt, Jennifer Elder, Pamela~E. Harris, Gordon~Rojas Kirby, Eva Reutercrona, Yuxuan Wang, and Juliet Whidden.
\newblock Unit interval parking functions and the r-{F}ubini numbers.
\newblock {\em Matematica}, 3(1):370--384, 2024.

\bibitem{cayley1875analytical}
Arthur Cayley.
\newblock Xxviii. on the theory of the analytical forms called trees.
\newblock {\em The London, Edinburgh, and Dublin Philosophical Magazine and Journal of Science}, 13(85):172--176, 1857.

\bibitem{hadaway2022combinatorial}
K.~P. Hadaway.
\newblock On combinatorical problems of generalized parking functions.
\newblock Williams College, Honors Thesis, 2022.

\bibitem{harris2023outcome}
Pamela~E. Harris, Brian~M. Kamau, J.~Carlos Mart\'{\i}nez~Mori, and Roger Tian.
\newblock On the outcome map of {MVP} parking functions: permutations avoiding 321 and 3412, and {M}otzkin paths.
\newblock {\em Enumer. Comb. Appl.}, 3(2):Paper No. S2R11, 16, 2023.

\bibitem{konheim1966occupancy}
Alan~G. Konheim and Benjamin Weiss.
\newblock An occupancy discipline and applications.
\newblock {\em SIAM J. Appl. Math.}, 14(6):1266--1274, 1966.

\bibitem{mendelson1982races}
Elliott Mendelson.
\newblock Races with ties.
\newblock {\em Math. Mag.}, 55(3):170--175, 1982.

\bibitem{stanley2016mathoverflow}
R.~P. Stanley.
\newblock \url{https://mathoverflow.net/questions/253773/number-of-boolean-algebra-subintervals-in-weak-order-of-s-n/253781#253781}.
\newblock Online, accessed 06-12-2023.

\bibitem{stanley2012enumerative}
Richard~P. Stanley.
\newblock {\em Enumerative combinatorics. {V}olume 1}, volume~49 of {\em Cambridge Studies in Advanced Mathematics}.
\newblock Cambridge University Press, Cambridge, second edition, 2012.

\bibitem{tenner2006reduced}
Bridget~Eileen Tenner.
\newblock Reduced decompositions and permutation patterns.
\newblock {\em J. Algebraic Combin.}, 24(3):263--284, 2006.

\bibitem{tenner2007pattern}
Bridget~Eileen Tenner.
\newblock Pattern avoidance and the {B}ruhat order.
\newblock {\em J. Combin. Theory Ser. A}, 114(5):888--905, 2007.

\bibitem{tenner2022interval}
Bridget~Eileen Tenner.
\newblock Interval structures in the {B}ruhat and weak orders.
\newblock {\em J. Comb.}, 13(1):135--165, 2022.

\end{thebibliography}

\end{document}